\documentclass[a4paper,11pt,reqno]{article}
\usepackage[utf8]{inputenc}

\usepackage[a4paper,top=2cm,bottom=2cm,left=2.2cm,right=2.2cm]{geometry}

\usepackage{titlesec}
\usepackage{lipsum}
\usepackage{enumerate}
\usepackage{enumitem}
\usepackage{mathrsfs}
\usepackage{amsmath,amssymb,amsthm}
\usepackage{mathtools}
\usepackage{graphicx, float, subfigure}
\usepackage{url}
\usepackage{tikz}
\usepackage{bbold}
\usepackage{mleftright}
\usepackage{color}
\usepackage{xcolor}
\usepackage{colortbl}
\usepackage{array}

\usepackage{ytableau}
\usepackage[all,cmtip]{xy}
\usepackage{multirow}
\usepackage{makecell}
\usepackage[colorinlistoftodos]{todonotes}
\usepackage[colorlinks=true, allcolors=red]{hyperref}
\usepackage{harpoon}
\usepackage{MnSymbol}
\usepackage{esvect}
\usepackage{diagbox}
\usepackage[title]{appendix}
\usepackage{authblk}

\theoremstyle{plain}
\newtheorem{theorem}{\scshape Theorem}[section]
\newtheorem{proposition}[theorem]{\scshape Proposition}
\newtheorem{lemma}[theorem]{\scshape Lemma}
\newtheorem{corollary}[theorem]{\scshape Corollary}

\newtheorem*{assumption*}{\scshape Assumption}

\newtheorem*{claim*}{Claim}

\theoremstyle{definition}

\newtheorem{remark}[theorem]{\scshape Remark}
\newtheorem{example}[theorem]{\scshape Example}

\newtheorem{problem}[theorem]{\scshape Problem}

\renewcommand{\det}{\mathsf{det}}

\numberwithin{equation}{section}

\titleformat{\section}{\centering\bfseries}{\thesection}{1em}{\MakeUppercase}
\titleformat{\subsection}{\bfseries}{\thesubsection}{1em}{}


\setlength{\parskip}{2mm}

\begin{document}
\title{Enumeration of pattern-avoiding $(0,1)$-matrices and their symmetry classes}

\author[1]{Sen-Peng Eu\thanks{\url{speu@math.ntnu.edu.tw}}}
\author[1]{Yi-Lin Lee\thanks{\url{yillee@ntnu.edu.tw}, the corresponding author.}} 
\affil[1]{Department of Mathematics, National Taiwan Normal University, Taipei, Taiwan}

\date{}
\maketitle

\begin{abstract}
    Recently, Brualdi and Cao studied $I_k$-avoiding $(0,1)$-matrices by decomposing them into zigzag paths and proved that the maximum number of $1$'s in such a matrix is given by an exact formula. We further study the structure of maximal $I_k$-avoiding $(0,1)$-matrices (IAMs) by interpreting them as families of non-intersecting lattice paths on the square lattice. Using this perspective, we establish a bijection showing that IAMs are equinumerous with plane partitions of a certain size. Moreover, we classify all ten symmetry classes of IAMs under the action of the dihedral group of order $8$ and show that the enumeration formulas for these classes are given by simple product formulas. Extending this approach to skew shapes, we derive a conceptual formula for enumerating maximal $I_k$-avoiding $(0,1)$-fillings of skew shapes.
\end{abstract}
\begin{small}
\noindent \textit{Keywords.} Pattern-avoiding matrices; plane partitions; non-intersecting paths; symmetry classes.

\vspace{-1mm}
\noindent \textit{2020 Mathematics Subject Classification.} 05A05, 05A15, 05A19, 05B20.
\end{small}

\section{Introduction and statement of results}\label{sec.intro}

A \emph{permutation} $p$ is an array $p_1p_2\cdots p_n$ on the first $n$ positive integers in some order, so that each integer appears exactly once. We say a permutation $p$ \emph{contains} a pattern $q =q_1q_2\cdots q_k$ if there exist indices $i_1<i_2<\cdots <i_k$ so that $p_{i_s} < p_{i_t}$ if and only if $q_s < q_t$. On the other hand, we say $p$ \emph{avoids} $q$ or $p$ is \emph{$q$-avoiding} if $p$ does not contain $q$. Patterns of permutations, especially pattern-avoiding permutations, have been investigated extensively. We refer the reader to the book written by Kitaev \cite{Kitaev11} and B\'ona \cite[Chapters 4 and 5]{Bona} for a comprehensive review.


%
%

The notion of pattern avoidance has been extended from permutations to $(0,1)$-matrices. Let $M$ be an $m \times n$ $(0,1)$-matrix and $Q$ an $s \times t$ $(0,1)$-matrix. We say $M$ \emph{contains} $Q$ if there is a selection of rows $(r_1,\dots,r_s)$ and columns $(c_1,\dots,c_t)$ of $M$ such that the $(r_i,c_j)$-entry of $M$ equals $1$ if the $(i,j)$-entry of $Q$ equals $1$. We say $M$ is \emph{$Q$-avoiding} if there is no such selection of rows and columns. 

There are two main directions of research on pattern-avoiding matrices. 

The first concerns an extremal problem: given a $Q$-avoiding $(0,1)$-matrix $M$, what is the maximum number of $1$'s that $M$ can contain? 
To the best of our knowledge, most results give a bound of the maximum number of $1$'s. For instance, Marcus and Tardos \cite{MT04} proved that when $Q$ is a $k \times k$ permutation matrix and $M$ is a square matrix of size $n$, then the number of $1$'s in a $Q$-avoiding $(0,1)$-matrix $M$ is bounded above by $2k^4 \binom{k^2}{k}n$; see also \cite{Keszegh05} and \cite{Cibulka13} for more results. Recently, Brualdi and Cao \cite{BC21} showed that when $Q=I_k$ (i.e., the $k \times k$ identity matrix), then the maximum number of $1$'s that $M$ can contain is given by an exact number (see Section \ref{sec.introIAM}).

The second direction investigates the structural relations between $Q_1$-avoiding $(0,1)$-matrices and $Q_2$-avoiding $(0,1)$-matrices for two distinct matrices $Q_1$ and $Q_2$. Inspired by the seminal work of Chen et al. \cite{CDDSY07} on $k$-noncrossing and $k$-nonnesting matchings and partitions, Krattenthaler \cite{Kra06} and de Mier \cite{deMier07} generalized to pattern-avoiding fillings of Young diagrams with nonnegative integers. Their work focuses primarily on proving equalities of the number of $I_k$-avoiding fillings of a Young diagram under various restrictions and the number of $J_k$-avoiding (i.e., avoid the $k \times k$ anti-identity matrix) fillings of the same shape under the same restriction. Such restrictions may include prescribed row sums and column sums, the number of $1$'s, the length of ``NE-chains'' and ``SE-chains''. Jonsson \cite{Jonsson05} further generalized this framework to pattern-avoiding fillings of stack polyominoes from a topological and geometric perspective, interpreting such fillings as simplicial complexes.

In this paper, we study the structure of ``maximal'' $I_k$-avoiding $(0,1)$-matrices (as defined in Section \ref{sec.introIAM}) and show that they are equinumerous with plane partitions of a certain size (Theorem \ref{thm1}). Moreover, we provide a complete classification of symmetry classes of these matrices (as shown in Section \ref{sec.introsym}) and show that the counting formulas for these symmetry classes are given by simple product formulas (Theorem \ref{thm2}). In Section \ref{sec.introskew}, we extend our study to``maximal'' $I_k$-avoiding $(0,1)$-fillings of skew shapes and provide a conceptual formula for enumerating these fillings (Theorem \ref{thm4}).

\subsection{Maximal $I_k$-avoiding $(0,1)$-matrices}\label{sec.introIAM}


Throughout the paper, we assume that the matrix has size $m \times n$ and is $I_k$-avoiding, where $m,n$, and $k$ are positive integers with $2 \leq k \leq \min\{m,n\}$. Sometimes, we may without loss of generality assume $m \leq n$.

We consider \emph{maximal} $I_k$-avoiding $(0,1)$-matrices $M$ (IAMs), in the sense that $M$ contains maximally many $1$'s; in other words, if we replace any $0$ entry by $1$ in $M$, then $M$ contains $I_k$. This terminology is also called \emph{$12\cdots k$-avoiding} by Brualdi and Cao \cite{BC21}. Let $\mathcal{M}_{m,n;k}$ denote the set of maximal $I_k$-avoiding $(0,1)$-matrices $M$ of size $m \times n$. 



Brualdi and Cao \cite[Lemma 6 and Theorem 7]{BC21} proved that
\begin{itemize}
    \item the maximum number of $1$'s of any matrix $M$ in $\mathcal{M}_{m,n;k}$ is $(k-1)(m+n-k+1)$, and
    \item every such matrix $M$ can be decomposed into $(k-1)$ \emph{zigzag paths}\footnote{They are called \emph{R-L zigzag paths} in \cite{BC21}.} of lengths $m+n-(2i-1)$, $i=1,2,\dots,k-1$, where a zigzag path of length $d$ is the set of $d$ connected entries of $M$ consisting of $1$'s, going in southwestern-northeastern direction, and they do not form a $2 \times 2$ block in $M$. 
\end{itemize}
Our interest is to enumerate maximal $I_k$-avoiding $(0,1)$-matrices $M$ of size $m \times n$. 

The main contribution of this paper gives a bijection between $\mathcal{M}_{m,n;k}$ and the set of certain plane partitions. A \emph{plane partition} of size $a \times b \times c$ is an $a \times b$ array $\pi=(\pi_{i,j})$ of nonnegative integers such that $\pi_{i,j} \leq c$ and these integers are weakly decreasing along rows and columns, that is, $\pi_{i,j} \geq \pi_{i+1,j}$ and $\pi_{i,j} \geq \pi_{i,j+1}$. Let $\mathcal{PP}(a,b,c)$ denote the set of all plane partitions of size $a \times b \times c$. The celebrated result due to MacMahon \cite{MacM} states that the number of plane partitions of size $a \times b \times c$ is given by the simple product formula $H(a,b,c)$.
\begin{equation}\label{eq.Hproduct}
    |\mathcal{PP}(a,b,c)| = H(a,b,c) := \prod_{i=1}^{a} \prod_{j=1}^{b} \prod_{\ell=1}^{c} \frac{i+j+\ell-1}{i+j+\ell-2}.
\end{equation}

We state the main result in the following theorem.
\begin{theorem}\label{thm1}
    Let $m,n$, and $k$ be positive integers with $2 \leq k \leq \min \{m,n\}$. The set of maximal $I_k$-avoiding $(0,1)$-matrices $M$ of size $m \times n$ is in bijection with the set of plane partitions of size $(m-k+1) \times (n-k+1) \times (k-1)$. Therefore, 
    \begin{equation}\label{eq.thm1}
        |\mathcal{M}_{m,n;k}| = H(m-k+1,n-k+1,k-1).
    \end{equation}
\end{theorem}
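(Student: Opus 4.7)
My plan is to prove Theorem \ref{thm1} by establishing an intermediate bijection between $\mathcal{M}_{m,n;k}$ and families of non-intersecting monotone lattice paths in the $m \times n$ grid, then invoking the Lindström--Gessel--Viennot lemma together with MacMahon's product formula (or exhibiting an explicit bijection with plane partitions).

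First, I would canonicalize the Brualdi--Cao decomposition by introducing the level function $u(r,c)$ equal to the length of the longest strictly $\prec$-increasing chain of $1$-entries of $M$ ending at $(r,c)$, where $\prec$ denotes the strict componentwise order on $[m] \times [n]$. Since $M$ is $I_k$-avoiding, $u$ takes values in $\{1, \ldots, k-1\}$, and the level sets $Z_i := u^{-1}(i)$ partition the $1$-entries into $k-1$ antichains. A length count shows each $Z_i$ is cell-adjacent (listed from SW to NE, its step count equals the total coordinate distance traversed, leaving no room for gaps), so each $Z_i$ is precisely a zigzag path in the Brualdi--Cao sense. The crucial structural step, which I expect to be the main technical obstacle, is to pin down the endpoints of $Z_i$ as $(m, i)$ on the southwest and $(i, n)$ on the northeast. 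I would argue this via a ``forced-$1$'' analysis: for any $0$-entry $(r, c)$ of $M$, flipping it to $1$ must create a length-$k$ chain; decomposing this chain as $a$ cells NW of $(r, c)$ and $k - 1 - a$ cells SE, the constraints $a \leq \min(r,c) - 1$ and $k - 1 - a \leq \min(m-r, n-c)$ force $M(r, j) = 1$ whenever $j \leq k-1$ and $r \geq m - k + j + 1$, yielding a staircase of forced $1$-entries in the bottom-left triangle (and symmetrically in the top-right). Using the staircase, the chain $(m-i+1, 1), (m-i+2, 2), \ldots, (m-1, i-1), (m, i)$ witnesses $u(m, i) \geq i$; combined with the trivial bound $u(m, i) \leq i$, this yields $u(m, i) = i$ and hence $(m, i) \in Z_i$, which together with the level profile of the bottom row identifies $(m, i)$ as the SW-most cell of $Z_i$. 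A symmetric argument handles $(i, n)$.

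Once the endpoints are fixed, I would observe that $(Z_1, \ldots, Z_{k-1})$ is a family of $k-1$ non-intersecting monotone lattice paths from $A_i = (m, i)$ to $B_i = (i, n)$; conversely, any such family reconstructs a maximal IAM by placing $1$-entries exactly at $\bigcup_i Z_i$, with the total count $(k-1)(m+n-k+1)$ matching the Brualdi--Cao bound and Mirsky's theorem applied to the antichain decomposition guaranteeing $I_k$-avoidance. I would then apply the Lindström--Gessel--Viennot lemma to enumerate such non-intersecting families as the determinant $\det\bigl[\binom{m+n-i-j}{n-i}\bigr]_{1 \leq i,j \leq k-1}$, which evaluates to $H(m-k+1, n-k+1, k-1)$ by a standard determinant identity. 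For the bijective statement, I would encode each $Z_i$ by the columns $c^{(i)}_1 \leq \cdots \leq c^{(i)}_{m-i}$ at which its up-moves occur; the non-intersection condition translates into the interlacing $c^{(i)}_{j+1} < c^{(i+1)}_j$ together with the boundary constraint $c^{(i)}_1 = i$ (forced for $i \leq k-2$), and after the shift $c^{(i)}_j \mapsto c^{(i)}_j - i$ one obtains a Gelfand--Tsetlin-type array bijective with $\mathcal{PP}(m-k+1, n-k+1, k-1)$.
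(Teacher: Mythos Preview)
Your approach is correct and genuinely different from the paper's. Both routes pass through families of non-intersecting zigzag paths, but they use different canonical decompositions and different endpoint configurations.

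The paper does not introduce the level function $u$. Instead it starts from the Brualdi--Cao length-graded decomposition (paths of lengths $m+n-1, m+n-3,\ldots$), observes that the two staircases of forced $1$'s pin down where each such path enters and leaves the staircase regions, and then \emph{truncates} every path at the staircase diagonals. This yields $k-1$ paths of \emph{equal} length $m+n-2k+3$, running from $\ell_i=(m-i+1,k-i)$ to $r_i=(k-i,n-i+1)$. After passing to lattice coordinates these are exactly the standard starting/ending points $u_i=(k-i-1,i-1)$, $v_i=(n-i,m-k+i)$ of the classical ``paths through the surface of a pile of cubes'' encoding of $\mathcal{PP}(m-k+1,n-k+1,k-1)$, so the bijection with plane partitions is immediate and no determinant evaluation or LGV step is needed.

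Your level-set decomposition $Z_i=u^{-1}(i)$ is more intrinsic (it is the Mirsky/patience-sorting canonical antichain decomposition and does not rely on the non-canonical Brualdi--Cao splitting), but the resulting paths run from $(m,i)$ to $(i,n)$ and have \emph{different} lengths. Consequently you must either invoke LGV and evaluate $\det\bigl[\binom{m+n-i-j}{n-i}\bigr]$, or carry out the Gelfand--Tsetlin re-encoding you sketch. Two places in your write-up would need tightening: the ``length count'' showing each $Z_i$ is a connected zigzag requires the max-antichain bound $|Z_i|\le m+n-2i+1$ (from $Z_i\subset[i,m]\times[i,n]$) together with the global equality $\sum_i|Z_i|=(k-1)(m+n-k+1)$, which you should make explicit; and your interlacing/GT-array paragraph is quite compressed compared to the clean identification the paper gets for free. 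What your approach buys is a self-contained canonical decomposition that in fact re-derives the Brualdi--Cao bound rather than assuming it; what the paper's approach buys is that the bijection with $\mathcal{PP}(m-k+1,n-k+1,k-1)$ drops out with no further work.
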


\subsection{Symmetry classes of IAMs}\label{sec.introsym}

The first application of Theorem \ref{thm1} allows us to classify and enumerate all the symmetry classes of IAMs. In general, assuming a finite group $G$ acts on a set of combinatorial objects $X$ is well-defined, and $H$ is a subgroup of $G$. A \emph{symmetry class} of $X$ is a collection of $H$-invariant objects of $X$. Enumerating each symmetry class of $X$ is challenging, since the structure of each class depends on the choice of a subgroup $H$, which typically requires different enumeration techniques. 

During the 1980s, the study of symmetry classes of combinatorial objects emerged as a central topic in enumerative combinatorics. In this paper, we will discuss in detail (in Section \ref{sec.sym}) $10$ symmetry classes of plane partitions identified by Stanley \cite{Stan86pp}; see also the survey paper by Krattenthaler \cite{Kra15S} and references therein. Symmetry classes of other combinatorial objects have been studied extensively since then, such as alternating sign matrices (\cite{Stan86}, \cite{Rob91,Rob00}, \cite[Section 2]{BFK23}), domino tilings of the Aztec diamond (\cite{Yang91}, \cite[Section 7]{Ciucu97}), and spanning trees of the Aztec diamond graphs (\cite{Ciucu08}). 

The dihedral group $D_8$ of order $8$ acts on the set $\mathcal{M}_{m,n;k}$ of all IAMs as symmetries of the square. This results in $10$ symmetry classes of IAMs shown in the list below. Note that since our matrices are $I_k$-avoiding, diagonally symmetric IAMs and anti-diagonally symmetric IAMs belong to different symmetry classes, as well as vertically symmetric IAMs and horizontally symmetric IAMs. Although Theorem \ref{thm1} shows that $\mathcal{M}_{m,n;k}$ is equinumerous with plane partitions, the structure of these two symmetry classes is different.
\begin{itemize}
    \item[(U)] \emph{Unrestricted} IAMs.
    
    \item[(DS)] \emph{Diagonally symmetric} IAMs that are symmetric in the main diagonal.
    
    \item[(AS)] \emph{Anti-diagonally symmetric} IAMs that are symmetric in the anti-diagonal.
    
    \item[(DAS)] \emph{Diagonally and anti-diagonally symmetric} IAMs that are symmetric in both diagonals.
    
    \item[(VS)] \emph{Vertically symmetric} IAMs that are invariant under flips around the vertical axis.
    
    \item[(HS)] \emph{Horizontally symmetric} IAMs that are invariant under flips around the horizontal axis. 
    
    \item[(VHS)] \emph{Vertically and horizontally symmetric} IAMs that are invariant under flips around the vertical axis and the horizontal axis.

    \item[(QTS)] \emph{Quarter-turn symmetric} IAMs that are invariant under a $90^{\circ}$ rotation.

    \item[(HTS)] \emph{Half-turn symmetric} IAMs that are invariant under a $180^{\circ}$ rotation.

    \item[(TS)] \emph{Totally symmetric} IAMs that are invariant under $D_8$.
\end{itemize}
For each symmetry class $* = U, DS, AS,\dots$, let $\mathcal{M}^{*}_{m,n;k}$ denote the set of maximal $I_k$-avoiding matrices of size $m \times n$ under the symmetry $*$. 

Five of the symmetry classes of IAMs are closely related to five symmetry classes of plane partitions (see Section \ref{sec.sym} for details). Consequently, the enumeration formulas for these five classes can be expressed as simple product formulas. The remaining five symmetry classes of IAMs are trivial. The result is stated in the following theorem.
\begin{theorem}\label{thm2}
    Let $m,n$, and $k$ be positive integers with $2 \leq k \leq \min \{m,n\}$.
    \begin{enumerate}
        \item The number of unrestricted $m \times n$ IAMs is given by
        \begin{equation}\label{eq.symU}
            |\mathcal{M}_{m,n;k}^{U}| = H(m-k+1,n-k+1,k-1).
        \end{equation}

        \item The number of diagonally symmetric $n \times n$ IAMs is given by
        \begin{equation}\label{eq.symDS}
            |\mathcal{M}_{n,n;k}^{DS}| = \prod_{1 \leq i \leq j \leq n-k+1} \frac{k+i+j-2}{i+j-1}.
        \end{equation}

        \item The number of anti-diagonally symmetric $n \times n$ IAMs is given by
        \begin{equation}\label{eq.symAS}
            |\mathcal{M}_{n,n;k}^{AS}| = \begin{cases} \displaystyle 
                \binom{n-\frac{k+1}{2}}{n-k}\prod_{1 \leq i \leq j \leq n-k-1} \frac{k+i+j}{i+j+1}, &\text{if $k$ is odd,}\\
                0, & \text{if $k$ is even}.
            \end{cases}
        \end{equation}

        \item The number of diagonally and anti-diagonally symmetric $n \times n$ IAMs is given by
        \begin{equation}\label{eq.symDAS}
            |\mathcal{M}_{n,n;k}^{DAS}| = \begin{cases}
            H(\frac{n-k+2}{2}, \frac{n-k}{2}, \frac{k-1}{2})    , &\text{ if $k,n$ are odd,}\\
            H(\frac{n-k+1}{2}, \frac{n-k+1}{2}, \frac{k-1}{2})    , &\text{ if $k$ is odd and $n$ is even.}\\
            0, &\text{ if $k$ is even.}
            \end{cases} 
        \end{equation}

        \item The number of half-turn symmetric $m \times n$ IAMs is given by the following formulas.
        \begin{itemize}
            \item If $k$ is even, then 
                \begin{equation}\label{eq.symHTSeven}
                |\mathcal{M}_{m,n;k}^{HTS}| = \begin{cases}
                H(\frac{m-k+1}{2}, \frac{n-k+1}{2}, \frac{k}{2})H(\frac{m-k+1}{2}, \frac{n-k+1}{2}, \frac{k-2}{2}) , &\text{ for $m,n$ odd,} \\
                H(\frac{m-k+1}{2}, \frac{n-k+2}{2}, \frac{k-2}{2})H(\frac{m-k+1}{2}, \frac{n-k}{2}, \frac{k}{2})     , &\text{ for $m$ odd and $n$ even,} \\
                0 , &\text{ for $m,n$ even.}
                \end{cases} 
            \end{equation}
            \item If $k$ is odd, then
            \begin{equation}\label{eq.symHTSodd}
                |\mathcal{M}_{m,n;k}^{HTS}| = \begin{cases}
                H(\frac{m-k+2}{2}, \frac{n-k}{2}, \frac{k-1}{2})H(\frac{m-k}{2}, \frac{n-k+2}{2}, \frac{k-1}{2}) , &\text{ for $m,n$ odd,} \\
                H(\frac{m-k+1}{2}, \frac{n-k+2}{2}, \frac{k-1}{2})H(\frac{m-k+1}{2}, \frac{n-k}{2}, \frac{k-1}{2})     , &\text{ for $m$ odd and $n$ even,} \\
               H(\frac{m-k+1}{2}, \frac{n-k+1}{2}, \frac{k-1}{2})^2  , &\text{ for $m,n$ even.}
                \end{cases} 
            \end{equation}
        \end{itemize}

        \item The number of the remaining five symmetry classes is given by 
        \begin{equation}
            |\mathcal{M}_{m,n;k}^{VS}|=|\mathcal{M}_{m,n;k}^{HS}| = |\mathcal{M}_{m,n;k}^{VHS}| = |\mathcal{M}_{n,n;k}^{QTS}| = |\mathcal{M}_{n,n;k}^{TS}| = \begin{cases}
                1, & \text{if $k$ is odd,}\\
                0, & \text{if $k$ is even.}
            \end{cases}
        \end{equation}  
    \end{enumerate}
\end{theorem}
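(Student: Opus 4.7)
The plan is to push each element of the dihedral group $D_8$ through the bijection of Theorem \ref{thm1} from $\mathcal{M}_{m,n;k}$ to the set $\mathcal{PP}(m-k+1,n-k+1,k-1)$, and then to invoke the classical product formulas for the ten symmetry classes of plane partitions (due to MacMahon, Andrews, Macdonald, Mills--Robbins--Rumsey, Proctor, and Stanley). Part (1) is then exactly Theorem \ref{thm1}.

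The first step is to verify that the bijection intertwines the matrix symmetries with the expected plane-partition symmetries. Working with the non-intersecting lattice-path encoding of the Brualdi--Cao zigzag decomposition, I expect to show that (i) matrix transposition corresponds to the transpose $\pi_{i,j}\leftrightarrow \pi_{j,i}$ of the plane partition; (ii) anti-diagonal reflection corresponds to the transpose-complementation $\pi_{i,j}\mapsto (k-1)-\pi_{n-k+2-j,\,n-k+2-i}$; (iii) the $180^{\circ}$ rotation corresponds to the complementation $\pi_{i,j}\mapsto (k-1)-\pi_{m-k+2-i,\,n-k+2-j}$; and (iv) the combination of (i) with (ii) is the simultaneous symmetry needed in the DAS case. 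Granted these identifications, parts (2)--(5) reduce, respectively, to the enumeration of symmetric, transpose-complement, symmetric transpose-complement, and self-complementary plane partitions in the $(m-k+1)\times(n-k+1)\times(k-1)$ box, and the stated product formulas are the classical ones. The parity dichotomies in the theorem (vanishing when $k$ is even, or when both $m,n$ are even for HTS) match exactly the known parity obstructions to the existence of the relevant complementing involution on the box.

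Part (6) I would handle by a direct structural analysis on the IAM side. Every IAM contains exactly $(m-k+1)(n-k+1)$ zeros arranged along the zigzag decomposition; under any of the five symmetries VS, HS, VHS, QTS, TS, I plan to show that these zeros are forced to form a single centered contiguous $(m-k+1)\times(n-k+1)$ rectangle inside the ambient matrix. Such a centered placement exists if and only if $k-1$ is even, i.e., $k$ is odd, in which case the IAM is uniquely determined; otherwise no IAM with the given symmetry can exist. This yields the common count $1$ or $0$ across all five classes.

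The main obstacle will be the equivariance analysis in step one, especially for anti-diagonal reflection and the half-turn rotation. Matrix transposition transports transparently through the Gessel--Viennot non-intersecting-path framework, but the other symmetries only permute the $k-1$ zigzag paths trivially (their lengths $m+n-2i+1$ being pairwise distinct) and act nontrivially on each individual path; translating that action into the correct complementation-style operation on the plane-partition entries requires careful bookkeeping with the base-line and frame conventions of the path encoding, and is especially delicate when explaining the factorization $H(\cdot)\cdot H(\cdot)$ appearing in the HTS formula as arising from the standard parity-split enumeration of self-complementary plane partitions. Once this equivariance is secured, the remainder of the proof amounts to identifying the classical symmetry class and citing the corresponding product formula.
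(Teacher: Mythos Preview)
Your plan is essentially the paper's own proof: push $D_8$ through the Theorem \ref{thm1} bijection, identify DS, AS, HTS with the symmetric, transpose-complementary, and self-complementary classes of plane partitions respectively, and cite Andrews, Proctor, and Stanley for the product formulas. Two small points where you diverge or should sharpen:

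\medskip
\textbf{Part (4).} What you call ``symmetric transpose-complement'' is not one of the ten standard classes, so there is no formula to cite under that name. The paper inserts one extra line here: if $\pi$ is symmetric and transpose-complementary then $\mathsf{co}(\pi)=\mathsf{re}(\mathsf{re}(\mathsf{co}(\pi)))=\mathsf{re}(\pi)=\pi$, so in fact $\mathcal{PP}^{S}\cap\mathcal{PP}^{TC}=\mathcal{PP}^{SSC}$, and it is Proctor's SSC formula that gives \eqref{eq.symDAS}. Be sure to make this identification explicit.

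\medskip
\textbf{Part (6).} Your ``zeros forced into a centred $(m-k+1)\times(n-k+1)$ block'' picture is correct (and the centring constraint $\tfrac{k-1}{2}\in\mathbb{Z}$ gives exactly the parity condition), but as stated it needs an argument for why the zeros must coalesce into a single rectangle. The paper argues this the other way round: any of VS, HS, VHS, QTS, TS forces staircases of $1$'s in \emph{all four} corners of $M$; the $k-1$ zigzag paths then have to absorb the $1$'s in the upper-left and lower-right staircases as well, which pins each path uniquely and forces $k-1$ to be even. Either route works, but the four-staircase argument gets you directly to the rigidity without having to analyse the zero set.
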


\subsection{Fillings of the Young diagram of skew shapes}\label{sec.introskew}

The second application of Theorem \ref{thm1} provides a new viewpoint from maximal $I_k$-avoiding $(0,1)$-matrices to maximal $I_k$-avoiding $(0,1)$-fillings of regions on the square lattice other than rectangles. In general, let $F$ be a filling of each box of a region $R$ on the square lattice with $0$ or $1$. We say $F$ \emph{contains} $I_k$ if there is a selection of rows $(r_1,\dots,r_k)$ and columns $(c_1,\dots,c_k)$ such that the box $(r_i,c_j) = \delta_{i,j}$, the Kronecker delta function, and the boxes $\{(r_i,c_j)|i,j=1,2,\dots,k\}$ are contained in $R$.  We say $F$ is \emph{$I_k$-avoiding} if there is no such selection of rows and columns. Similarly, we consider a maximal $I_k$-avoiding $(0,1)$-filling $F$ of $R$, in the sense that, $F$ contains maximally many $1$'s. Let $\mathcal{F}_{R;k}$ be the set of maximal $I_k$-avoiding $(0,1)$-fillings of $R$.

We first study the region $R=\bar{R}_{m,n;t}$ obtained from an $m \times n$ rectangle (assuming $m \leq n$) by removing a staircase of size $t$ on the lower right corner. We prove that when $t=m-k$ or $t=m-k+1$, the number of maximal $I_k$-avoiding $(0,1)$-fillings of $\bar{R}_{m,n;t}$ is given by a nice product formula. Our data shows that for other values of $t$, the number contains some large prime factors, so having a product formula seems unlikely. The specialty of these two values of $t$ can be regarded as removing a ``maximal'' staircase on the lower right corner from the rectangle. 

\begin{theorem}\label{thm3}
    Let $m,n$, and $k$ be positive integers with $2 \leq k \leq m \leq n$. Assume $t=m-k$ or $m-k+1$. Then the number of maximal $I_k$-avoiding $(0,1)$-fillings of $\bar{R}_{m,n;t}$ is given by
    \begin{equation}\label{eq.maxstaircaseprod}
        |\mathcal{F}_{\bar{R}_{m,n;t};k}| = \prod_{i=1}^{k-1}\frac{(m+n-2k+2i+\delta_{t,m-k})!}{(m-i)!(n+i-1+\delta_{t,m-k})!} \cdot \prod_{i=1}^{k-2}i! \cdot \prod_{1 \leq i \leq j \leq k-1}(n-m+i+j-1+\delta_{t,m-k}),
    \end{equation}
    where $\delta_{i,j}$ is the Kronecker delta function.
\end{theorem}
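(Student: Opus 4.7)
The plan is to extend the non-intersecting lattice paths interpretation developed in the proof of Theorem~\ref{thm1} from rectangles to the skew region $\bar{R}_{m,n;t}$. As in the rectangular case, a maximal $I_k$-avoiding filling of $\bar{R}_{m,n;t}$ decomposes into $(k-1)$ zigzag paths of $1$'s lying inside the region, which after the standard change of coordinates become a family of $(k-1)$ non-intersecting lattice paths with prescribed starting and ending points. The cases $t = m-k$ and $t = m-k+1$ are distinguished from generic $t$ precisely because the removed staircase is large enough that the endpoints of all $(k-1)$ paths are forced to be the same for every maximal filling, which is what makes a clean product formula possible.

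The first step is therefore to determine these source and target sets explicitly. For generic $t$ the zigzag paths can end in many positions along the staircase boundary, producing a counting problem with large prime factors; but for $t \in \{m-k, m-k+1\}$ the remaining room is so tight that the shortest zigzag path must hug the hypotenuse of the removed staircase, and each subsequent path is pushed onto a parallel track. The resulting configuration is a family of $(k-1)$ non-intersecting north-east lattice paths with completely fixed source and target sets, translated by a single unit between the two cases in the way encoded by $\delta_{t,m-k}$.

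Next, I would apply the Lindstr\"om--Gessel--Viennot lemma to express $|\mathcal{F}_{\bar{R}_{m,n;t};k}|$ as a $(k-1)\times(k-1)$ determinant of binomial coefficients. The three factors appearing in \eqref{eq.maxstaircaseprod} are highly suggestive: the first factor has the shape of a Cauchy-style binomial normalization, the pure factorial $\prod_{i=1}^{k-2} i!$ looks like the Vandermonde denominator one expects after row reduction, and the product $\prod_{1 \le i \le j \le k-1}(n-m+i+j-1+\delta_{t,m-k})$ is characteristic of evaluations coming from Krattenthaler's determinant identities or from a shifted specialization of MacMahon's box formula. A natural target is to recognize the determinant as counting plane partitions (equivalently, lozenge tilings of a hexagon with a triangular notch) of a trapezoidal shape whose parameters depend on $m, n, k$, and to match the parameters against \eqref{eq.Hproduct}.

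The main obstacle will be the determinant evaluation itself, together with the book-keeping required to handle the cases $t=m-k$ and $t=m-k+1$ uniformly through the shift by $\delta_{t,m-k}$. A promising shortcut, which I would try before committing to a direct determinant calculation, is to construct a bijection between the non-intersecting path family and plane partitions of a specific trapezoidal skew shape, thereby reducing the claim to a special case of a known enumeration and bypassing any explicit manipulation of the determinant.
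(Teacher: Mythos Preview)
Your overall strategy---interpret the fillings as $(k-1)$ non-intersecting lattice paths, apply Lindstr\"om--Gessel--Viennot, then evaluate the resulting determinant---matches the paper's approach. But the key geometric reason you give for why $t\in\{m-k,m-k+1\}$ is special is wrong, and this causes you to miss the ingredient that actually makes the determinant tractable.

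The starting points $u_i=(k-i-1,i-1)$ and ending points $v_i=(n-i,m-k+i)$ are determined by the staircases of $1$'s in the \emph{lower-left} and \emph{upper-right} corners, exactly as in Theorem~\ref{thm1}; the staircase removed from the \emph{lower-right} does not touch those corners, so the endpoints are fixed for \emph{every} admissible $t$, not just the two special values. It is therefore not true that ``for generic $t$ the zigzag paths can end in many positions,'' nor that the paths are forced to ``hug the hypotenuse'' when $t$ is maximal. What the removed staircase does is impose the constraint that every path stay weakly above the line $y=x+(t-n+1)$. The paper counts such paths using the reflection principle, obtaining
\[
b_{i,j}=\binom{m+n-2k+2}{m-k-i+j+1}-\binom{m+n-2k+2}{m-k-i-j+2-\delta_{t,m-k}},
\]
and the distinguishing feature of $t=m-k$ and $t=m-k+1$ is purely algebraic: these are exactly the values for which the reflected term has the form $\binom{A}{-j-L_i+c}$ with $c\in\{0,1\}$, which is the hypothesis of Krattenthaler's determinant identity (Lemma~\ref{lem.det}). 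For other $t$ one still gets a determinant of differences of binomials, but the shift $c$ falls outside $\{0,1\}$ and no closed product is available.

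So the gap in your plan is the reflection principle step: without it you cannot write down the matrix entries, and without recognizing the $c\in\{0,1\}$ structure you have no mechanism explaining why precisely these two $t$ yield products. Your proposed bijective shortcut to trapezoidal plane partitions is unlikely to help, since paths constrained by a diagonal wall do not correspond to ordinary plane partitions in a box; they are closer to symplectic-type tableaux, and the cleanest route remains the determinant evaluation the paper uses.
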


Our method allows us to enumerate maximal $I_k$-avoiding $(0,1)$-fillings of $R$ where $R$ is the Young diagram of a skew shape $\lambda/\mu$, which is introduced as follows. A \emph{partition} $\lambda=(\lambda_1,\lambda_2,\dotsc,\lambda_{\ell})$ of $n$ is a weakly decreasing sequence of nonnegative integers whose sum is $n$. Denote by $\mathsf{n}(\lambda)$ the number of parts $\ell$ of the partition $\lambda$. By convention, when $\ell=0$, $\lambda = \emptyset$ is the empty partition. 

The \emph{Young diagram} of shape $\lambda$ is a collection of boxes arranged in left-justified rows, with $\lambda_i$ boxes in the $i$th row. For another partition $\mu=(\mu_1,\dots,\mu_{\ell})$, we write $\mu \subset \lambda$ if $\mu$ is contained in $\lambda$ as Young diagrams, that is, $\mu_i \leq \lambda_i$ for all $i$. In this case, the Young diagram of the skew shape $\lambda/\mu$ is obtained by removing from the diagram of $\lambda$ all boxes that also belong to the diagram of $\mu$. 

Now, let $\lambda/\mu$ be a skew shape, where $\lambda_1=n$ and $\mathsf{n}(\lambda)=m$. The Young diagram of $\lambda/\mu$ is inscribed in the rectangle of size $m \times n$. When considering maximal $I_k$-avoiding $(0,1)$-fillings of $\lambda/\mu$, we require that the staircase of size $k$ can be placed in the lower left corner and the upper right corner of $\lambda/\mu$. This requirement can be expressed as follows.
\begin{equation}\label{eq.skewcondition}
    \begin{cases}
        \lambda_m \geq k, \\
        \mu \text{ has at least $k$ zero parts,}\\
        \lambda_1 - \mu_1 \geq k, \text{ and}\\ \lambda_1=\lambda_2=\cdots=\lambda_{\alpha} = n \text{ for $\alpha \geq k$.}
    \end{cases}
\end{equation}

Following the idea of viewing maximal $I_k$-avoiding fillings $F$ as non-intersecting lattice paths, we provide a conceptual formula for the number of fillings $F$ of a skew shape $\lambda/\mu$, which is expressed as a determinant of a matrix, and the entries of the matrix are relatively complicated. Evaluating such a determinant is challenging; this can be computed for each skew shape $\lambda/\mu$ case by case. Currently, we have found that the only shape that has a nice product formula is presented in Theorem \ref{thm3}.

Before stating the conceptual formula, we need some notations and definitions. First, we take the \emph{planar dual graph} of the Young diagram of $\lambda/\mu$, that is, identify each box in the Young diagram as a vertex, and if two boxes are adjacent, then connect the two corresponding vertices with an edge. Since $k \geq 2$, under the requirement \eqref{eq.skewcondition}, the resulting planar dual graph is again the Young diagram of a partition denoted by $\bar{\lambda}/\bar{\mu}$. 

Second, let $a,b$ be two nonnegative integers. We define the function on partitions 
\begin{equation}\label{eq.gamma}
    \gamma_{a,b}(\lambda) = (\lambda_{a+1},\lambda_{a+2},\dots,\lambda_{m-b-1},\lambda_{m-b})
\end{equation}
by deleting the first $a$ parts and the last $b$ parts of $\lambda$. Clearly, $\gamma_{0,0}(\lambda)=\lambda$. For convenience, we also define the function on two partitions $\mu \subset \lambda$ (some parts of $\mu$ may be $0$)
\begin{equation}\label{eq.f}
    f(\lambda,\mu) = \det \left[ \binom{\lambda_j-\mu_i+1}{j-i+1} \right]_{i,j=1}^{\mathsf{n}(\lambda)}.
\end{equation}

The conceptual formula is presented in the following Theorem.
\begin{theorem}\label{thm4}
    Assume $m,n$, and $k$ are positive integers with $2 \leq k \leq \min\{m,n\}$. Let $\lambda=(\lambda_1,\lambda_2,\dotsc,\lambda_{m})$ and $\mu=(\mu_1,\mu_2,\dotsc,\mu_{m})$ be two partitions such that $\mu \subset \lambda$, $\lambda_1=n$, and they satisfy \eqref{eq.skewcondition}. We write $\bar{\lambda}/\bar{\mu}$ for the skew partition whose Young diagram is the planar dual graph of $\lambda/\mu$. Then the number of maximal $I_k$-avoiding $(0,1)$-fillings of $\lambda/\mu$ is given by
    \begin{equation}\label{eq.thm4}
        |\mathcal{F}_{\lambda/\mu; k}| = \det [f(\gamma_{k-1-j,i-1}(\bar{\lambda}),\gamma_{k-1-j,i-1}(\bar{\mu}))]_{i,j=1}^{k-1}.
    \end{equation}
\end{theorem}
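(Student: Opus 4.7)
The plan is to generalize the non-intersecting lattice path bijection used in the proof of Theorem \ref{thm1} from rectangles to skew shapes and then obtain the formula via the Lindström--Gessel--Viennot (LGV) lemma. First, I would extend the Brualdi--Cao zigzag decomposition to any filling $F \in \mathcal{F}_{\lambda/\mu;k}$: condition \eqref{eq.skewcondition} ensures that the $k \times k$ identity pattern can sit in both the upper-right and lower-left corners of $\lambda/\mu$, so the greedy argument from \cite{BC21} applies essentially verbatim and decomposes $F$ uniquely into $k-1$ nested zigzag paths $P_1, \dots, P_{k-1}$. Dualizing the cells of $\lambda/\mu$ to the vertices of $\bar\lambda/\bar\mu$ turns each $P_i$ into a lattice path $\widetilde{P}_i$ in $\bar\lambda/\bar\mu$ whose starting vertex lies on the northeast boundary and whose ending vertex lies on the southwest boundary; the no-$2\times 2$-block property of zigzags becomes vertex-disjointness of the $\widetilde{P}_i$.

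Let $A_1, \dots, A_{k-1}$ and $B_1, \dots, B_{k-1}$ denote the natural sources and sinks of these $k-1$ paths along the two boundaries. Because these endpoint configurations are non-crossing (the $A_i$ and $B_i$ are listed in the same order along opposite sides of $\bar\lambda/\bar\mu$), any non-identity permutation in the LGV expansion produces an intersecting path system; the LGV lemma then yields
\[
|\mathcal{F}_{\lambda/\mu;k}| \;=\; \det \bigl[\, N(A_i, B_j) \,\bigr]_{i,j=1}^{k-1},
\]
where $N(A_i,B_j)$ counts single lattice paths from $A_i$ to $B_j$ that remain inside $\bar\lambda/\bar\mu$.

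The crux is the identification $N(A_i,B_j) = f\bigl(\gamma_{k-1-j,\,i-1}(\bar\lambda),\,\gamma_{k-1-j,\,i-1}(\bar\mu)\bigr)$. Since $\bar\lambda/\bar\mu$ is itself a skew Young diagram rather than a rectangle, a path from $A_i$ to $B_j$ is geometrically confined to the sub-skew-diagram visible from these two endpoints, and I expect this sub-diagram to equal $\gamma_{k-1-j,\,i-1}(\bar\lambda)/\gamma_{k-1-j,\,i-1}(\bar\mu)$: the top $k-1-j$ rows of $\bar\lambda/\bar\mu$ are unreachable from $B_j$, and the bottom $i-1$ rows are unreachable from $A_i$. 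The number of single lattice paths inside such a truncated skew Young region is given by a classical Lindström/Kreweras-type determinant, which after a careful match of conventions equals the definition of $f$ in \eqref{eq.f}.

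The main obstacle is precisely this identification---matching the trimming $\gamma_{k-1-j,i-1}$ with the region accessible to the $(i,j)$-path and verifying that $f$ genuinely computes the number of single paths in the truncated skew Young region. This requires delicate bookkeeping, especially along the boundary rows where consecutive parts of $\bar\lambda$ or $\bar\mu$ coincide and where an index shift can easily be off by one. Once these identifications are in place, the remainder is a formal combination of LGV with \eqref{eq.f}, and the adaptation of the rectangular zigzag decomposition under assumption \eqref{eq.skewcondition} should present no conceptual difficulty.
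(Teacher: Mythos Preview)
Your proposal is correct and follows essentially the same route as the paper: reduce $\mathcal{F}_{\lambda/\mu;k}$ to $k-1$ non-intersecting lattice paths in the dual skew shape $\bar\lambda/\bar\mu$, apply Lindstr\"om--Gessel--Viennot, and identify each matrix entry as a single-path count in the truncated skew shape $\gamma_{k-1-j,\,i-1}(\bar\lambda)/\gamma_{k-1-j,\,i-1}(\bar\mu)$ via the Kreweras determinant, which is exactly the function $f$ in~\eqref{eq.f}. The only extra device in the paper's argument, which resolves the bookkeeping you flag as the main obstacle, is to first extend each source and sink horizontally to the left and right walls of $\bar\lambda/\bar\mu$; after this extension the accessible region for the $(i,j)$-path is manifestly obtained by deleting the top $k-1-j$ and bottom $i-1$ rows, so the identification with $f$ is immediate.
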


The rest of this paper is organized as follows. In Section \ref{sec.main}, we prove Theorem \ref{thm1} by establishing a bijection between plane partitions and IAMs. We also consider a weighted enumeration of IAMs and show that it is given by a triple product formula. In Section \ref{sec.sym}, we investigate the symmetry classes of IAMs and prove Theorem \ref{thm2}. In Section \ref{sec.skew}, we study maximal $I_k$-avoiding $(0,1)$-fillings of $\lambda/\mu$ and prove Theorems \ref{thm3} and \ref{thm4}. Finally, some research directions are proposed in Section \ref{sec.remarks}.

\section{The bijection and the weighted enumeration}\label{sec.main}

We introduce the correspondence between plane partitions and non-intersecting lattice paths in Section \ref{sec.mainPP}. In Section \ref{sec.mainIAM}, we analyze the structure of maximal $I_k$-avoiding $(0,1)$-matrices (IAMs) and prove Theorem \ref{thm1}. In Section \ref{sec.mainweight}, we provide a triple product formula for the weighted enumeration of IAMs.

\subsection{Plane partitions and non-intersecting lattice paths}\label{sec.mainPP}

Plane partitions $\pi=(\pi_{i,j})$ are originally defined as fillings of a rectangular array with positive integers that are weakly decreasing along rows and columns. There is a natural way to visualize a plane partition of size $a \times b \times c$ as a pile of unit cubes inside the $a \times b \times c$ box, where a stack of $\pi_{i,j}$ unit cubes is placed at the position $(i,j)$ in the box. See Figures \ref{fig.pparray} and \ref{fig.ppbox} for an example of a plane partition in $\mathcal{PP}(5,3,4)$. 

One can encode a plane partition $\pi=(\pi_{i,j}) \in \mathcal{PP}(a,b,c)$ by the sequence of partitions $(\lambda^{s}(\pi))_{s=1}^{c}$, where $\lambda^{s}(\pi)$ is the partition corresponding to the shape of $\{\pi_{i,j} | \pi_{i,j} \geq s\}$ contained in $\pi$, for $s=1,2,\dots,c$. In other words, viewing $\pi$ as a pile of unit cubes, $\lambda^{s}(\pi)$ is the shape of the $s$th layer of this pile. In our running example (Figure \ref{fig.pp}), the sequence contains four terms: $\lambda^{1} = (3,3,3,2,1), \lambda^{2}=(3,3,2),\lambda^{3}=(2,2,1)$, and $\lambda^{4} = \emptyset$.

Moreover, one can interpret a pile of unit cubes as non-intersecting lattice paths, where each path traverses through the ``surface'' of each layer of the pile. We describe it in detail as follows (see Figure \ref{fig.pppath}). First, we view a pile of unit cubes as a graph drawing on the plane and mark the midpoint of each vertical edge of this graph. The paths start at the midpoints on the left side of the graph, and the steps of each path always connect the midpoints of opposite sides of rhombi until the paths reach the right side of the graph. We then obtain in this way the collection of paths connecting the midpoints on the left side with the midpoints on the right side. Clearly, these paths are \emph{non-intersecting}, meaning that any two paths do not pass through the same point. 

By slightly deforming the paths, we may place them on the square lattice (see Figure \ref{fig.pplattice}). Let $U=\{u_i=(c-i,i-1)|i=1,\dots,c\}$ and $V=\{v_j = (b+c-j,a+j-1)|j=1,\dots,c\}$ be two sets of lattice points. We write $\mathcal{P}(u_i,v_j)$ for the set of paths going from $u_i$ to $v_j$ and $\mathcal{P}(U,V)$ for the set of non-intersecting lattice paths $(p_1,\dots,p_{c})$, where $p_i \in \mathcal{P}(u_i,v_i)$ for $i=1,\dots,c$. A path that starts and ends at the same point is considered as the path of length zero. From the above discussion, we have obtained a bijection between the set of plane partitions $\mathcal{PP}(a,b,c)$ and families of non-intersecting lattice paths $\mathcal{P}(U,V)$.
\begin{figure}[hbt!]
    \centering
    \subfigure[]{\label{fig.pparray}\includegraphics[height=0.18\textwidth]{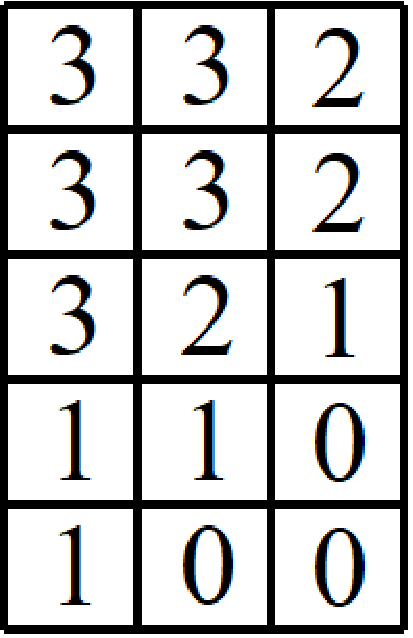}}
    \hspace{4mm}
    \subfigure[]
    {\label{fig.ppbox}\includegraphics[height=0.27\textwidth]{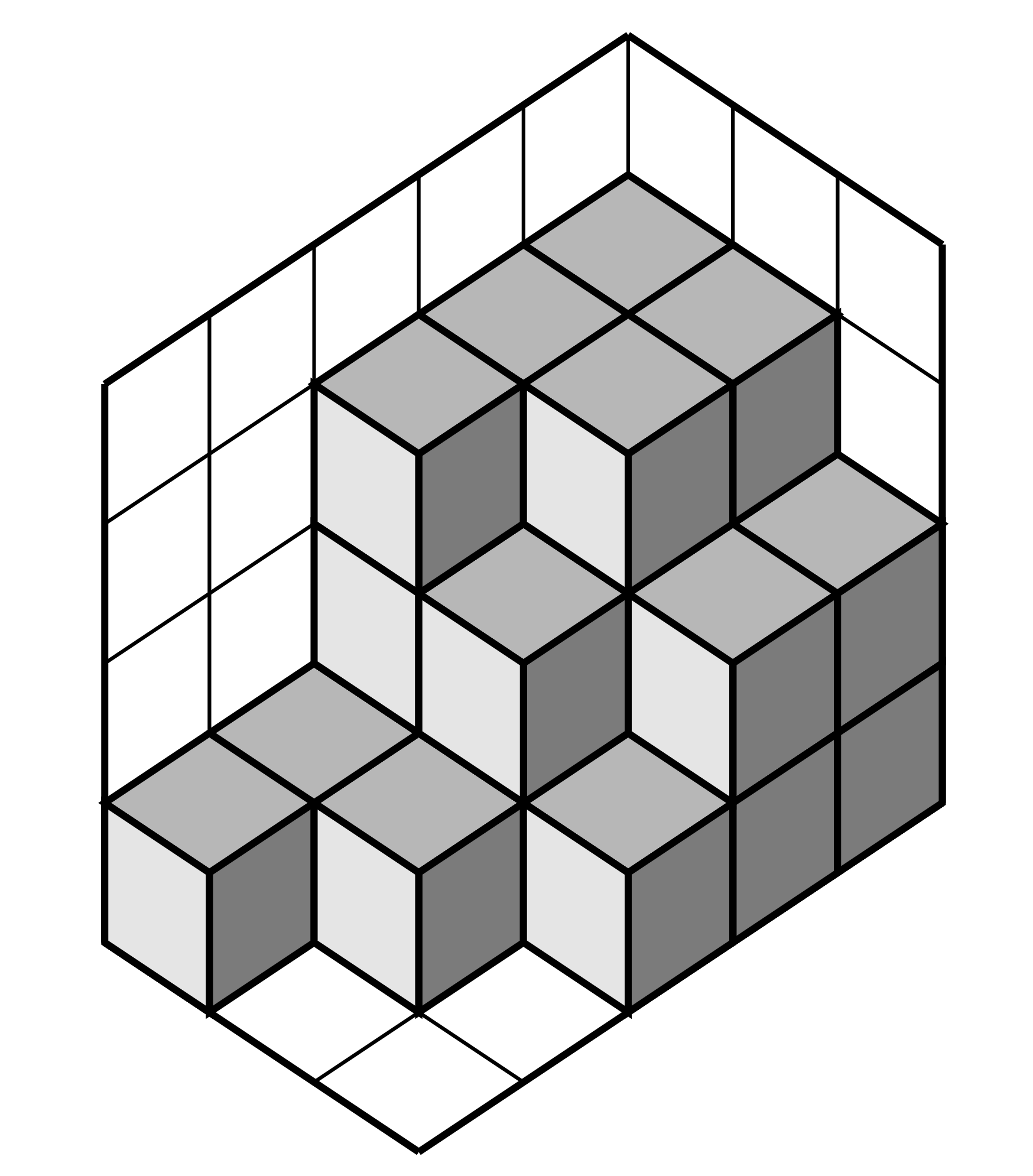}}
    \hspace{2mm}
    \subfigure[]
    {\label{fig.pppath}\includegraphics[height=0.27\textwidth]{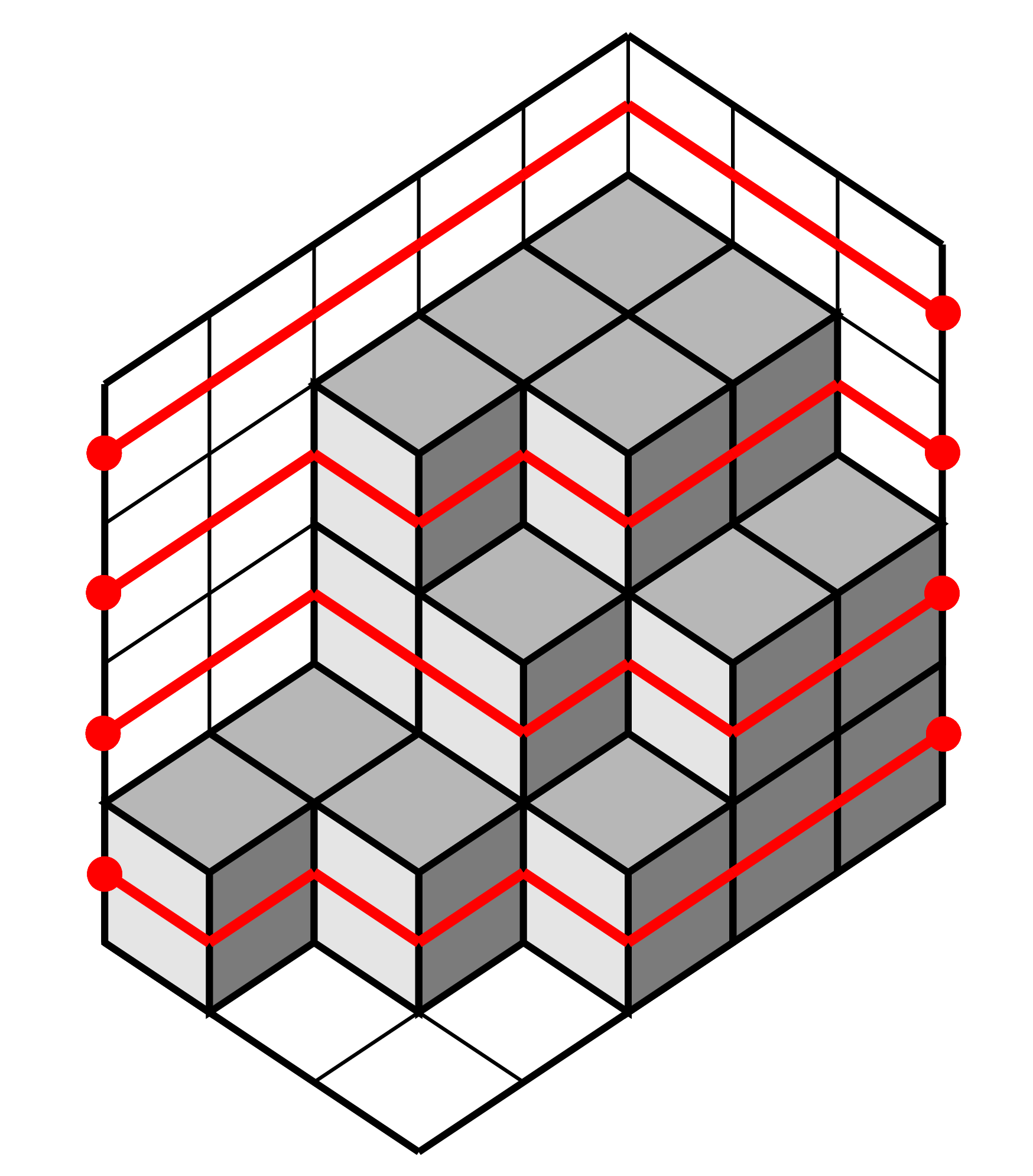}}
    \hspace{2mm}
    \subfigure[]{\label{fig.pplattice}\includegraphics[height=0.27\textwidth]{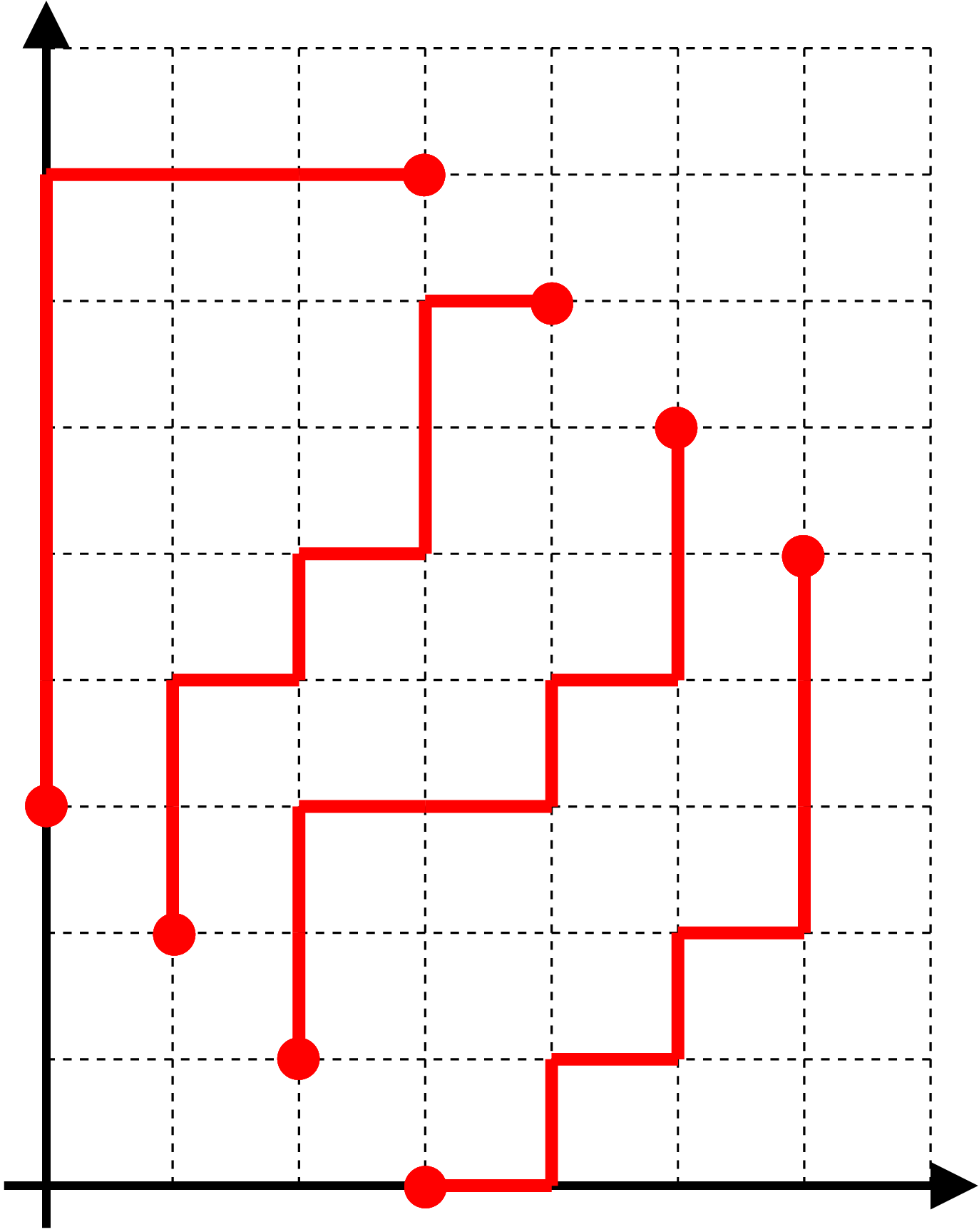}}
    \caption{(a) A plane partition $\pi \in \mathcal{PP}(5,3,4)$. (b) Viewing $\pi$ as a pile of unit cubes in the $5 \times 3 \times 4$ box. (c) The non-intersecting paths encoding of $\pi$. (d) The corresponding non-intersecting lattice paths of $\pi$ which are obtained from deforming the paths in Figure \ref{fig.pppath}.}
    \label{fig.pp}
\end{figure}

\subsection{Maximal $I_k$-avoiding $(0,1)$-matrices and non-intersecting lattice paths}\label{sec.mainIAM}

Let $M$ be a maximal $I_k$-avoiding $(0,1)$-matrix of size $m \times n$. We observe that the entries $\{(i,j)|1 \leq j \leq k-1, m-k+j+1 \leq i \leq m\}$ and $\{(i,j)|1 \leq i \leq k-1, n-k+i+1 \leq j \leq n\}$ of $M$ must be filled with $1$'s. Otherwise, if there is a $0$ in these entries, one can replace it with $1$, and $M$ is still $I_k$-avoiding; this contradicts the maximality of the number of $1$'s in $M$. Note that these entries form two staircases located at the lower left and upper right corners of $M$, respectively. 

Let $\{\ell_i=(m-i+1,k-i)|i=1,\dots,k-1\}$ (resp., $\{r_i=(k-i,n-i+1)|i=1,\dots,k-1\}$) be the ``diagonal'' entries in the staircase located at the lower left (resp., upper right) corner of $M$. See Figure \ref{fig.matrixstaircases} for an illustration. As presented in Section \ref{sec.introIAM}, Brualdi and Cao showed that the matrix $M$ can be decomposed into $k-1$ zigzag paths $p_1,p_2,\dots p_{k-1}$ (assuming going in the northeastern direction in $M$) of lengths $m+n-1,m+n-3,\dots,m+n-(2k-3)$, respectively.
    \begin{figure}[hbt!]
        \centering
        $\left[
            \begin{array}{c|c|c|c|c|c|c|c}
              &  &  & r_{k-1} & * & \cdots & * & *  \\
              \hline
              &  &  &  & r_{k-2} & \ddots & * & * \\
              \hline
              &  &  &  &  & \ddots & * & * \\
              \hline
              &  &  &  &  &  & r_2 & * \\
              \hline
              &  &  &  &  &  &  & r_1 \\
              \hline
             \ell_{k-1} &  &  &  &  &  &  &  \\
              \hline
             * & \ell_{k-2} &  &  &  &  &  & \\
              \hline
             \vdots & \ddots & \ddots &  &  &  &  & \\
              \hline
             * & * & * & \ell_2 &  &  &  &  \\
              \hline
             * & * & * & * & \ell_1 &  &  & 
            \end{array}
        \right]$
        \caption{An illustration of two staircases of size $k-1$ in a matrix. The ``diagonal'' entries of staircases are labeled $\ell_i$'s and $r_i$'s, and the entries marked $*$ must be filled with $1$'s.}
        \label{fig.matrixstaircases}
    \end{figure}

In the following lemma, the structure of these zigzag paths obtained from the decomposition of Brualdi and Cao is presented.
\begin{lemma}\label{lem.zigzags}
    For each $i=1,2,\dots,k-1$, the zigzag path $p_i$ must connect $\ell_{j}$ with $r_{j}$ for some $j$, and contains $k-1-i$ entries in both lower left and upper right staircases, respectively.
\end{lemma}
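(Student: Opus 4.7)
The plan is to combine a counting argument for the number of staircase cells on each path with a non-crossing Lindström--Gessel--Viennot style argument pairing the cells $\ell_j$ with the cells $r_j$.

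First, observe that a NE-going path cannot exit the lower-left staircase and later re-enter, so $p_i \cap \text{LL}$ is a single NE sub-chain of $p_i$. A quick check shows that the two cells immediately NE of any $\ell_j$ lie outside LL, so this sub-chain must terminate at some $\ell_{j(i)}$; symmetrically $p_i$ enters UR at a unique cell $r_{j'(i)}$. A direct NE-distance calculation gives that the number of cells strictly between $\ell_j$ and $r_{j'}$ on any NE path joining them is $m+n+1-2k$, independent of $j$ and $j'$. Combining this with $|p_i|=m+n-2i+1$ yields
\begin{equation*}
|p_i \cap \text{LL}| + |p_i \cap \text{UR}| = 2(k-i).
\end{equation*}

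Next I would pin down the individual counts using the chain structure of each staircase. The lower-left staircase, under the NE order, is a triangular poset of $\binom{k}{2}$ cells with maximum antichain $\{\ell_1, \dots, \ell_{k-1}\}$ of size $k-1$. By Dilworth's theorem, any NE-chain decomposition uses at least $k-1$ chains, so the $k-1$ chains obtained by restricting the paths $p_i$ to LL form a minimum decomposition (in particular, none of them is empty). A Greene-type theorem applied to this triangular poset, whose maximum union of $j$ chains has size $\binom{k}{2}-\binom{k-j}{2}$, forces the multi-set of chain lengths in any such minimum decomposition to be exactly $\{1,2,\dots,k-1\}$. Hence $\lambda(i):=|p_i \cap \text{LL}|$ and $\rho(i):=|p_i \cap \text{UR}|$ are both permutations of $\{1,\dots,k-1\}$, and a short induction on $i$ using $\lambda(i)+\rho(i)=2(k-i)$ together with the upper bound $\lambda(i),\rho(i)\le k-i$ (obtained by peeling off the values already used by $p_1,\dots,p_{i-1}$) forces $\lambda(i)=\rho(i)=k-i$. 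This gives the claimed count of $k-1-i$ entries beyond $\ell_{j(i)}$ and $r_{j'(i)}$ in each staircase.

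Finally, to show $j'(i)=j(i)$ for each $i$, I would use the fact that two NE paths on the grid sharing no cells are automatically non-crossing as planar curves (by a coordinate-monotonicity argument). The middle portions of $p_1,\dots,p_{k-1}$ are therefore mutually non-crossing NE paths going from the set $\{\ell_{j(i)}\}$ to the set $\{r_{j'(i)}\}$, and both index sets are totally ordered from SE to NW along their respective diagonals. A standard Lindström--Gessel--Viennot argument then shows that the only pairing of endpoints compatible with non-crossing is the identity, so $j'(i)=j(i)$ for each $i$. The main obstacle is the inductive argument of Step 2: verifying that the restriction of the global chain decomposition to each staircase is truly a minimum chain decomposition so that Greene's theorem applies, and then forcing the symmetric distribution $\lambda(i)=\rho(i)$ from the length identity.
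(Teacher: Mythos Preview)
Your Step 2 contains a genuine gap: Greene's theorem does \emph{not} force the multiset of chain lengths in a minimum chain decomposition to equal the Greene shape. For the triangular poset $\text{LL}$ with $k=4$ (cells $(x,y)$ with $x,y\ge 0$, $x+y\le 2$), the Greene invariants are indeed $d_1=3$, $d_2=5$, $d_3=6$, giving shape $(3,2,1)$; but the three chains $\{(1,0),(2,0)\}$, $\{(0,1),(0,2)\}$, $\{(0,0),(1,1)\}$ form a perfectly good minimum chain decomposition with sizes $(2,2,2)$. Since your upper bound $\lambda(i)\le k-i$ in the induction is justified by ``peeling off the values already used'' from this alleged permutation, the whole inductive step collapses once the permutation claim fails.

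What is missing is precisely the feature that distinguishes the chains you actually have from arbitrary chains: each $p_i\cap\text{LL}$ is a \emph{saturated} chain (a zigzag path) terminating at some $\ell_j$ on the anti-diagonal $x+y=k-2$. With that in hand, one argues directly by anti-diagonals: a saturated chain of length $\ell$ ending on $x+y=k-2$ starts on $x+y=k-1-\ell$, so the number of chains meeting anti-diagonal $x+y=s$ equals the number with length $\ge k-1-s$; since anti-diagonal $s$ contains exactly $s+1$ cells and each chain meets it in at most one cell, exactly $s+1$ of the chains have length $\ge k-1-s$. This forces the length multiset to be $\{1,2,\dots,k-1\}$, and from there your induction $\lambda(i)+\rho(i)=2(k-i)$, $\lambda(i),\rho(i)\le k-i$ goes through. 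This anti-diagonal argument is essentially what the paper does (tersely): it observes that $p_1$, having maximal length $m+n-1$, must run from $(m,1)$ to $(1,n)$ and hence occupies the cell at $x+y=0$; then $p_2$ is forced to start at $x+y=1$; and so on.

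Your Step~1 and Step~3 are fine. In fact your Lindstr\"om--Gessel--Viennot justification that $j'(i)=j(i)$ is more explicit than the paper's own proof, which asserts this matching without argument.
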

\begin{proof}
    Since these $k-1$ zigzag paths are non-intersecting, each zigzag path must pass through $\ell_i$ and $r_i$ for some $i=1,\dots,k-1$. For each $i$, a zigzag path connecting $\ell_i$ and $r_i$ has length $m+n-(2k-3)$, which is the minimum length of the zigzag paths in the decomposition of Brualdi and Cao.
    
    To fulfill the length restrictions of these zigzag paths, the longest zigzag path $p_1$ connects $\ell_{i_1}$ and $r_{i_1}$ for some $i_1$, and also passes through $k-2$ entries in both lower left and upper right staircases, respectively. In other words, $p_1$ starts at the $(m,1)$-entry and ends at the $(1,n)$-entry of $M$ and its length is given by $m+n-1$. Now, the second longest one connects $\ell_{i_2}$ and $r_{i_2}$ for some $i_2 \neq i_1$, and passes through $k-3$ entries\footnote{Since $k-2$ entries have been taken for $p_1$, we cannot take, say $k-2$ entries in one staircase and $k-4$ entries in the other staircase for $p_2$.} in both lower left and upper right staircases, respectively. 
    
    Continuing this argument, we arrive at the zigzag path $p_{k-1}$, which starts at $\ell_{i_{k-1}}$ and ends at $r_{i_{k-1}}$. Therefore, $p_i$ connects $\ell_{j}$ with $r_{j}$ for some $j$, and contains $k-1-i$ entries in both lower left and upper right staircases, respectively.
\end{proof}

However, the decomposition of $M$ into $k-1$ zigzag paths is not unique. This is due to the selection of $k-1-i$ entries of $p_i$ in both staircases. In Figure \ref{fig.example}, we present two different decompositions of a maximal $I_5$-avoiding $(0,1)$-matrix of size $9 \times 7$ into $4$ zigzag paths. Their lengths are $15$ (gray), $13$ (light gray), $11$ (orange), and $9$ (pink).
\begin{figure}[hbt!]
    \centering
    \subfigure{
    $\left[
    \begin{array}{c|c|c|c|c|c|c}
      \cellcolor{gray}{1} & \cellcolor{gray}{1} & \cellcolor{gray}{1} & \cellcolor{gray}{1} & \cellcolor{gray}{1} & \cellcolor{gray}{1} & \cellcolor{gray}{1} \\ 
      \hline
      \cellcolor{gray}{1} & 0 & 0 & \cellcolor{lightgray}{1} & \cellcolor{lightgray}{1} & \cellcolor{lightgray}{1} & \cellcolor{lightgray}{1} \\ 
      \hline
      \cellcolor{gray}{1} & 0 & 0 & \cellcolor{lightgray}{1} & 0 & \cellcolor{orange}{1} & \cellcolor{orange}{1} \\ 
      \hline
      \cellcolor{gray}{1} & 0 & \cellcolor{lightgray}{1} & \cellcolor{lightgray}{1} & 0 & \cellcolor{orange}{1} & \cellcolor{pink}{1} \\ 
      \hline
      \cellcolor{gray}{1} & \cellcolor{lightgray}{1} & \cellcolor{lightgray}{1} & 0 & \cellcolor{orange}{1} & \cellcolor{orange}{1} & \cellcolor{pink}{1} \\ 
      \hline
      \cellcolor{gray}{1} & \cellcolor{lightgray}{1} & \cellcolor{orange}{1} & \cellcolor{orange}{1} & \cellcolor{orange}{1} & 0 & \cellcolor{pink}{1} \\ 
      \hline
      \cellcolor{gray}{1} & \cellcolor{lightgray}{1} & \cellcolor{orange}{1} & 0 & 0 & \cellcolor{pink}{1} & \cellcolor{pink}{1} \\ 
      \hline
      \cellcolor{gray}{1} & \cellcolor{lightgray}{1} & \cellcolor{orange}{1} & 0 & \cellcolor{pink}{1} & \cellcolor{pink}{1} & 0 \\ 
      \hline
      \cellcolor{gray}{1} & \cellcolor{lightgray}{1} & \cellcolor{orange}{1} & \cellcolor{pink}{1} & \cellcolor{pink}{1} & 0 & 0 \\ 
    \end{array}
    \right]$}
    \hspace{10mm}
    \subfigure{
    $\left[
    \begin{array}{c|c|c|c|c|c|c}
      \cellcolor{gray}{1} & \cellcolor{gray}{1} & \cellcolor{gray}{1} & \cellcolor{gray}{1} & \cellcolor{gray}{1} & \cellcolor{gray}{1} & \cellcolor{gray}{1} \\ 
      \hline
      \cellcolor{gray}{1} & 0 & 0 & \cellcolor{orange}{1} & \cellcolor{orange}{1} & \cellcolor{orange}{1} & \cellcolor{lightgray}{1} \\ 
      \hline
      \cellcolor{gray}{1} & 0 & 0 & \cellcolor{orange}{1} & 0 & \cellcolor{pink}{1} & \cellcolor{lightgray}{1} \\ 
      \hline
      \cellcolor{gray}{1} & 0 & \cellcolor{orange}{1} & \cellcolor{orange}{1} & 0 & \cellcolor{pink}{1} & \cellcolor{lightgray}{1} \\ 
      \hline
      \cellcolor{gray}{1} & \cellcolor{orange}{1} & \cellcolor{orange}{1} & 0 & \cellcolor{pink}{1} & \cellcolor{pink}{1} & \cellcolor{lightgray}{1} \\ 
      \hline
      \cellcolor{gray}{1} & \cellcolor{orange}{1} & \cellcolor{pink}{1} & \cellcolor{pink}{1} & \cellcolor{pink}{1} & 0 & \cellcolor{lightgray}{1} \\ 
      \hline
      \cellcolor{gray}{1} & \cellcolor{orange}{1} & \cellcolor{pink}{1} & 0 & 0 & \cellcolor{lightgray}{1} & \cellcolor{lightgray}{1} \\ 
      \hline
      \cellcolor{gray}{1} & \cellcolor{orange}{1} & \cellcolor{pink}{1} & 0 & \cellcolor{lightgray}{1} & \cellcolor{lightgray}{1} & 0 \\ 
      \hline
      \cellcolor{gray}{1} & \cellcolor{lightgray}{1} & \cellcolor{lightgray}{1} & \cellcolor{lightgray}{1} & \cellcolor{lightgray}{1} & 0 & 0 \\ 
    \end{array}
    \right]$}
    \caption{Two different decompositions of a matrix in $\mathcal{M}_{9,7;5}$ into $4$ zigzag paths.}
    \label{fig.example}
\end{figure}

\begin{lemma}\label{lem.zigzagbij}
    The set of maximal $I_k$-avoiding $(0,1)$-matrices of size $m \times n$ $\mathcal{M}_{m,n;k}$ is in bijection with the set of zigzag paths $p_1^{\prime},p_2^{\prime},\dots,p_{k-1}^{\prime}$, where $p_i^{\prime}$ starts at the entry $\ell_i=(m-i+1,k-i)$ and ends at the entry $r_i=(k-i,n-i+1)$.
\end{lemma}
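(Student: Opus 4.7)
The plan is to construct mutually inverse maps between the two sets, using the Brualdi--Cao decomposition from Lemma \ref{lem.zigzags} as the key bridge.

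\textbf{Setup.} In every IAM $M$, the entries of the two staircases other than the diagonal entries $\ell_j,r_j$ are forced to equal $1$ by maximality: otherwise, flipping such a $0$ to $1$ keeps $M$ $I_k$-avoiding and increases the count of $1$'s, contradicting maximality. Call this fixed set of $(k-1)(k-2)$ entries $S$, and let $T$ denote the remaining $1$-entries of $M$. Since $M$ has $(k-1)(m+n-k+1)$ ones in total, we get $|T|=(k-1)(m+n-2k+3)$, which is exactly $(k-1)$ times the common length of any zigzag from $\ell_i$ to $r_i$.

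\textbf{Forward direction.} Given $M\in\mathcal{M}_{m,n;k}$, I first apply Lemma \ref{lem.zigzags} to decompose $M$ into zigzag paths $q_1,\dots,q_{k-1}$ of lengths $m+n-1,m+n-3,\dots,m+n-2k+3$, where $q_i$ passes through a unique pair $(\ell_{\sigma(i)},r_{\sigma(i)})$ and carries $k-1-i$ additional off-diagonal staircase entries on each side. Stripping these off-diagonal entries from $q_i$ yields a zigzag $\tilde{q}_i$ from $\ell_{\sigma(i)}$ to $r_{\sigma(i)}$ of length $m+n-2k+3$; set $p'_j:=\tilde{q}_{\sigma^{-1}(j)}$. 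Although the Brualdi--Cao decomposition is non-unique (see Figure \ref{fig.example}), the resulting family $(p'_1,\dots,p'_{k-1})$ is independent of the choice: $\bigcup_j p'_j=T$ is intrinsic to $M$, and the only partition of $T$ into $k-1$ non-intersecting NE zigzags with starting set $\{\ell_i\}$ and ending set $\{r_i\}$ is the identity matching $\ell_i\leftrightarrow r_i$, since any permuted matching would force two paths to cross (both $\{\ell_i\}$ and $\{r_i\}$ are arranged from southeast to northwest as $i$ increases).

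\textbf{Reverse direction.} Given a non-intersecting family $(p'_1,\dots,p'_{k-1})$ with $p'_i$ from $\ell_i$ to $r_i$, define $M$ to have $1$'s on $S\cup\bigcup_i p'_i$ and $0$'s elsewhere. The total number of $1$'s is $(k-1)(k-2)+(k-1)(m+n-2k+3)=(k-1)(m+n-k+1)$, the maximum. To verify $I_k$-avoidance I observe that within any single zigzag path two distinct $1$-entries cannot simultaneously have strictly increasing rows and columns, because traversing a zigzag makes row indices nonincreasing and column indices nondecreasing. Thus any $k$-element SE chain of $1$-entries would require contributions from $k$ distinct zigzag paths. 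Assembling each $p'_{k-i}$ with the off-diagonal entries in column $i$ of the lower-left staircase and row $i$ of the upper-right staircase produces an extended zigzag path $q'_i$, giving a valid decomposition of $M$ into only $k-1$ zigzags; so the longest SE chain has length at most $k-1$, and $M$ cannot contain $I_k$. The two maps are manifestly inverse, and the bijection follows.

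\textbf{Main obstacle.} The core subtlety is the uniqueness of the canonical decomposition: showing that $T$ splits into $k-1$ non-intersecting NE zigzags with the prescribed endpoints in exactly one way, independent of the Brualdi--Cao decomposition used to generate it. This rests on a non-crossing matching argument for the nested endpoint sequences $\{\ell_i\},\{r_i\}$, but one must additionally verify that the extra $2\times 2$-avoidance structure imposed on zigzag paths is compatible with the non-intersecting constraint, and that the column/row extensions in the reverse direction still produce $2\times 2$-free zigzags at the junction points near each $\ell_i$ and $r_i$.
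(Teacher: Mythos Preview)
Your approach is essentially the same as the paper's: forward via the Brualdi--Cao decomposition of Lemma~\ref{lem.zigzags}, reverse by overlaying the paths with the two staircases; your write-up is in fact considerably more detailed than the paper's two-sentence argument, and in particular you verify $I_k$-avoidance in the reverse direction explicitly, which the paper leaves implicit. The uniqueness issue you flag in your ``Main obstacle'' paragraph is genuine (and the paper glosses over it with ``it is clear''), but note that the non-crossing matching argument alone does not finish it---you still need that a \emph{given} set $T$ admits at most one partition into non-intersecting NE paths with the prescribed endpoints; this follows because every $p'_i$ meets each diagonal $\{r-c=d\}$, $k-n-1\le d\le m-k+1$, exactly once, and non-intersection forces the relative order of the $k-1$ paths along each diagonal to agree with the order of the $\ell_i$, so each point of $T$ is assigned to a path by its rank on its diagonal.
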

\begin{proof}
    From the above discussion and Lemma \ref{lem.zigzags}, it is clear that a matrix in $\mathcal{M}_{m,n;k}$ uniquely determines the $k-1$ zigzag paths starting at $\ell_i$'s and ending at $r_i$'s. 

    Conversely, given zigzag paths $p_1^{\prime},p_2^{\prime},\dots,p_{k-1}^{\prime}$, we first fill in the entries in the lower left and the upper right staircases of size $k-2$ with $1$'s. Next, observe that we cannot add any $1$'s in the matrix since this will create $k$ diagonal entries containing $1$'s. This gives a matrix in $\mathcal{M}_{m,n;k}$.
\end{proof}

We are now ready to prove that $\mathcal{M}_{m,n;k}$ is in bijection with plane partitions $\mathcal{PP}(n-k+1,m-k+1,k-1)$.
\begin{proof}[Proof of Theorem \ref{thm1}.]
    One can identify each entry of the $m \times n$ matrix $M$ as a lattice point on the square lattice of size $(m-1) \times (n-1)$; the horizontal edges are oriented east and the vertical edges are oriented north. By convention, the $(m,1)$-entry of a matrix is identified as the origin on the square lattice. These zigzag paths can be viewed as non-intersecting lattice paths that go from the bottom left to the top right on the square lattice, passing through lattice points that correspond to the entry $1$ of $M$.
    
    By Lemma \ref{lem.zigzagbij}, $\mathcal{M}_{m,n;k}$ is in bijection with the set of zigzag paths $p_1^{\prime},p_2^{\prime},\dots,p_{k-1}^{\prime}$, where $p_i^{\prime}$ starts at the entry $\ell_i=(m-i+1,k-i)$ and ends at the entry $r_i=(k-i,n-i+1)$. On the square lattice, these zigzag paths correspond to non-intersecting lattice paths $\mathcal{P}(U,V)$, where the set of starting points $U=\{u_i=(k-i-1,i-1)|i=1,\dots,k-1\}$ and the set of ending points $V=\{v_i=(n-i,m-k+i)|i=1,\dots,k-1\}$. Moreover, as introduced in Section \ref{sec.mainPP}, $\mathcal{P}(U,V)$ is in bijection with the set of plane partitions $\mathcal{PP}(a,b,c)$ with $a=m-k+1,b=n-k+1$, and $c=k-1$. We encourage the reader to check that the plane partition given in Figure \ref{fig.pp} corresponds to the matrix given in Figure \ref{fig.example} via the non-intersecting lattice paths shown in Figure \ref{fig.pplattice}.

    Finally, by \eqref{eq.Hproduct}, we obtain $|\mathcal{M}_{m,n;k}| = H(m-k+1,n-k+1,k-1)$. This completes the proof of Theorem \ref{thm1}.
\end{proof}

We close this subsection by analyzing the number of decompositions of each matrix into zigzag paths, which is presented in the following proposition.
\begin{proposition}\label{prop.ribbon}
    The number of decompositions of a matrix $M \in \mathcal{M}_{m,n;k}$ into $k-1$ zigzag paths introduced by Brualdi and Cao is $(k-1)!$.
\end{proposition}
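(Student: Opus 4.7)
The plan is to show that the set of decompositions of $M$ into Brualdi--Cao zigzag paths is in bijection with the symmetric group on $k-1$ letters. By Lemma \ref{lem.zigzags}, any such decomposition $(p_1,\dots,p_{k-1})$ determines a permutation $\sigma=(j_1,\dots,j_{k-1})$ of $\{1,\dots,k-1\}$ via the rule that $p_i$ connects $\ell_{j_i}$ with $r_{j_i}$. The task is to prove that this assignment is a bijection.

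First I would observe that for each $i$, the portion of $p_i$ lying strictly between the two staircases must coincide with the strict interior of the canonical path $p'_{j_i}$ from Lemma \ref{lem.zigzagbij}. Indeed, a length count shows $p_i$ has $m+n-2k+1$ cells outside the two staircases, equal to the length of the strict interior of $p'_{j_i}$; since the non-staircase cells of $M$ are tiled disjointly by the interiors of the canonical paths, this interior identification is forced. Therefore the entire decomposition is determined by $\sigma$ together with the extensions of each $p_i$ into the lower-left and upper-right staircases.

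The crux is to show that for each $\sigma$ these extensions are uniquely determined. I would work in shifted coordinates so that the lower-left staircase becomes $\{(a,b):a,b\geq 0,\,a+b\leq k-2\}$, with $\ell_j=(j-1,k-j-1)$ on the outer anti-diagonal $a+b=k-2$. The lower-left portion of $p_i$, call it $P_i$, contains $k-i$ cells ending at $\ell_{j_i}$; since $P_i$ uses only $N$- and $E$-steps, each step increases $a+b$ by exactly $1$, so $P_i$ occupies precisely one cell on each anti-diagonal $a+b=d$ for $d=i-1,\dots,k-2$. Counting cells per anti-diagonal shows that the $d+1$ cells of the $d$-th anti-diagonal are used by exactly the $d+1$ paths $P_1,\dots,P_{d+1}$. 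A short verification (essentially that two non-intersecting monotone paths cannot swap their southeast-to-northwest order between consecutive anti-diagonals) then shows that the cell-to-path assignment on each anti-diagonal is uniquely inherited from the assignment on the outermost anti-diagonal, where $P_i$ sits at $\ell_{j_i}$. This reconstructs the lower-left decomposition from $\sigma$ uniquely, and the identical argument handles the upper-right staircase.

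Finally I would verify that the decomposition assembled from any $\sigma$ is genuinely a Brualdi--Cao decomposition: each $P_i$ uses exactly one cell on each of $k-i$ consecutive anti-diagonals, hence is an $(N/E)$-monotone zigzag of the prescribed length satisfying the no-$2\times 2$-block condition, and the cells are partitioned disjointly so the $p_i$'s are mutually non-intersecting. Combined with the interior identification, this produces the inverse map, completing the bijection. I expect the main obstacle to be the order-preservation step on the anti-diagonals; once that is pinned down, the remaining verifications are routine bookkeeping and yield $(k-1)!$ decompositions.
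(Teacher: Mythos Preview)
Your proposal is correct and takes a genuinely different route from the paper. The paper argues by induction on $k$: it first reduces to counting decompositions $\mathcal{Z}_{k-1}$ of the lower-left staircase into zigzags of lengths $1,2,\ldots,k-1$ (the upper-right staircase being determined by flipping), and then shows $|\mathcal{Z}_{k-1}|=(k-1)\cdot|\mathcal{Z}_{k-2}|$ by choosing which diagonal entry $\ell_j$ becomes the length-$1$ path and arguing that each decomposition in $\mathcal{Z}_{k-2}$ extends uniquely. You instead build an explicit bijection with $S_{k-1}$ by analyzing the anti-diagonal structure directly: the $d$-th anti-diagonal has exactly $d+1$ cells, used by exactly $P_1,\ldots,P_{d+1}$, and order-preservation under NE-steps pins down every position from the data of $\sigma$ on the outermost anti-diagonal. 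Your approach makes the permutation bijection explicit and handles existence and uniqueness in one stroke; the paper's induction is more elementary but leaves the bijection implicit.

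One remark: your justification that the non-staircase portion of $p_i$ coincides with $\mathrm{int}(p'_{j_i})$ (``by length count and disjoint tiling'') is correct but compressed---disjoint tiling alone does not obviously force the identification. The real point is that the non-staircase $1$-cells admit a \emph{unique} partition into $k-1$ non-intersecting NE-zigzags, by the very same order-preservation-on-anti-diagonals argument you deploy inside the staircase; this forces $\{Q_i\}=\{\mathrm{int}(p'_j)\}$ as unordered sets, and the labeling is then determined by positions on the first non-staircase anti-diagonal. Since you already have this tool in hand, the step is easily filled. (The paper's reduction to the staircase count relies on the same implicit step.)
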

\begin{proof}
    Based on the proof of Lemma \ref{lem.zigzags}, the number of the decompositions of $M$ is given by the number of selections of entries for each path in the staircases, as well as the choices of entries $\ell_{i_j}$ and $r_{i_j}$. Due to the length restriction of each zigzag path, once the zigzag paths in the lower left staircase are determined, the zigzag paths in the upper right staircase are automatically given by flipping the one in the lower left staircase across the diagonal. 
    
    Now, the problem is equivalent to finding the number of decompositions of the staircase of size $k-1$ into $k-1$ zigzag paths satisfying
    \begin{itemize}
        \item the length of each zigzag path is $1,2,\dots,k-1$, respectively, and
        \item each zigzag path must contain exactly one diagonal entry of the staircase.
    \end{itemize}
    Let $\mathcal{Z}_{k-1}$ be the set of these decompositions mentioned above. We claim that $|\mathcal{Z}_{k-1}| = (k-1)!$.

    To prove the claim, we proceed by induction on $k$. The base case, $k=2$, is clear since the staircase reduces to a single entry, and there is only one decomposition. Now, we assume that the statement holds for $k-2$, that is, $|\mathcal{Z}_{k-2}|=(k-2)!$.
    
    Let $L=\{\ell_1,\ell_2,\dots,\ell_{k-1}\}$ be the entries on the diagonal of the staircase of size $k-1$. To obtain a decomposition in $\mathcal{Z}_{k-1}$, we first need to choose one of the entries from $L$, say $\ell_j$, to form the zigzag path of length $1$. This gives $k-1$ choices. Second, we extend each zigzag path in a decomposition of $\mathcal{Z}_{k-2}$ by including one of the entries from $L \setminus \{\ell_j\}$. 
    
    Notice that in a decomposition of $\mathcal{Z}_{k-2}$, the two zigzag paths that contain the entry below $\ell_j$ and on the left of $\ell_j$, respectively, have only one way to extend them. Then, the next two adjacent zigzag paths in the same decomposition of $\mathcal{Z}_{k-2}$ again have only one way to extend them. Continuing this process, one can see that once $\ell_j$ is determined, there is a unique way to extend each decomposition of $\mathcal{Z}_{k-2}$. 
    \begin{figure}[hbt!]
        \centering
        \includegraphics[height=0.28\textwidth]{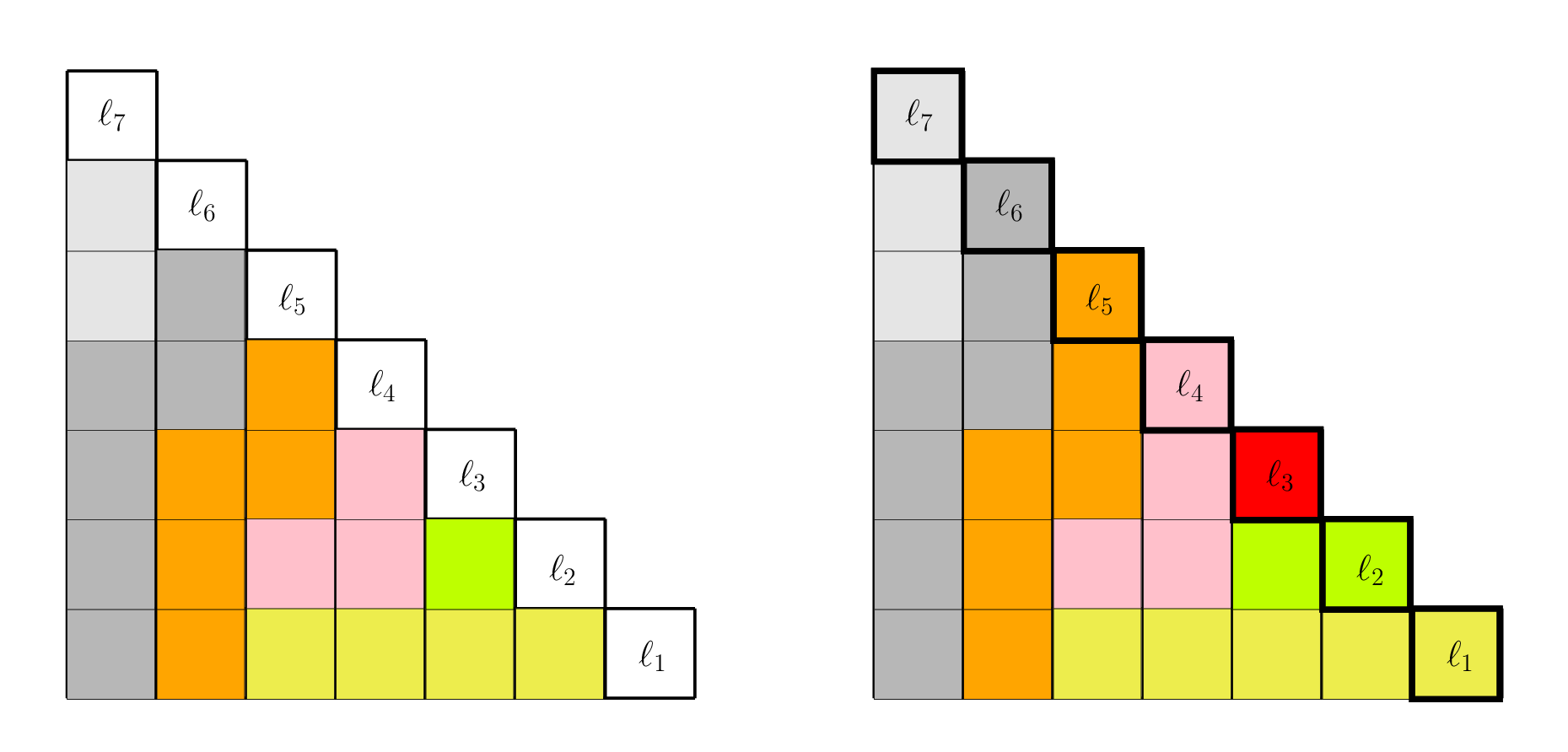}
        \caption{An example of extending zigzag paths in a decomposition of $\mathcal{Z}_6$ (left) to a decomposition of $\mathcal{Z}_7$ (right).}
        \label{fig.zigzagdecomp}
    \end{figure}

    Figure \ref{fig.zigzagdecomp} shows an example of extending a decomposition in $\mathcal{Z}_{6}$ to $\mathcal{Z}_7$ by selecting $\ell_3$ (the entry colored red) as the zigzag path of length $1$. Notice that the zigzag paths colored pink and lime in $\mathcal{Z}_6$ contain the entries below and to the left of $\ell_3$; there is only one way to extend these two zigzag paths. Therefore, $|\mathcal{Z}_{k-1}| = |\mathcal{Z}_{k-2}| \cdot (k-1) = (k-1)!$. 
\end{proof}

\subsection{The weighted enumeration of maximal $I_k$-avoiding $(0,1)$-matrices}\label{sec.mainweight}

We define two statistics on our matrix below. Let $M=[M_{i,j}]$ be a matrix in $\mathcal{M}_{m,n;k}$. For an entry $M_{i,j} = 0$, define
\begin{equation}
    \mathsf{v}(M_{i,j})=|\{d>0 | M_{i+d,j+d} = 1 \}|.
\end{equation}
That is, for each $M_{i,j}=0$, the value $\mathsf{v}(M_{i,j})$ counts the number of $1$'s lying strictly below and to the right of $M_{i,j}$ along its diagonal. We also define $\mathsf{v}(M)= \sum_{M_{i,j}=0} \mathsf{v}(M_{i,j})$ (resp., $\mathsf{v}_d(M) = \sum_{M_{i,i}=0} \mathsf{v}(M_{i,i})$) to be the total value of $\mathsf{v}(M_{i,j})$ for all (resp., diagonal) zero entries $M_{i,j}$ of $M$.

On the other hand, for an entry $M_{i,j} = 1$, define
\begin{equation}
    \mathsf{w}(M_{i,j}) =|\{d>0 | M_{i-d,j-d} = 0 \}|.
\end{equation}
That is, for each $M_{i,j}=1$, the value $\mathsf{w}(M_{i,j})$ counts the number of $0$'s lying strictly above and to the left of $M_{i,j}$ along its diagonal.

From Lemma \ref{lem.zigzagbij}, we assume the $j$th zigzag path $p_j^{\prime}$ meets the main diagonal of $M$ at $M_{i_j,i_j}$. For each $j=1,2.\dots,k-1$, define the sequence $\mathsf{d}_j(M) = \mathsf{w}(M_{i_j,i_j})$ (here we read the entries $M_{i_j,i_j}$ in the reverse order from lower right to upper left of $M$).


As a corollary of Theorem \ref{thm1}, we obtain the triple product formula for the generating function of matrices in $\mathcal{M}_{m,n;k}$ weighted based on the above two statistics. 
\begin{corollary}\label{cor.product}
    Given three positive integers $m,n$, and $k$ with $2 \leq k \leq \min\{m,n\}$. Then the generating function of matrices in $\mathcal{M}_{m,n;k}$ is given by
    \begin{equation}\label{eq.gf}
        \sum_{M \in \mathcal{M}_{m,n;k}}  q^{\mathsf{v}(M)} t^{\mathsf{v}_d(M)} \prod_{\ell = 1}^{k-1} \frac{(q^{k-\ell}  ; q)_{\mathsf{d}_{\ell}(M)}}{(tq^{k-\ell} ; q)_{\mathsf{d}_{\ell}(M)}} = \prod_{i=1}^{m-k+1} \prod_{j=1}^{n-k+1} \prod_{\ell=1}^{k-1} \frac{1-tq^{i+j+\ell-1}}{1-tq^{i+j+\ell-2}},
    \end{equation}
    where $(x;q)_N = \begin{cases} (1-x)(1-xq)(1-xq^2)\cdots (1-xq^{N-1}), &\text{ if $N \geq 1$,} \\
    1, &\text{ if $N=0$.}
    \end{cases}$
\end{corollary}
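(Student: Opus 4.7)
The strategy is to apply the bijection established in Theorem \ref{thm1}, which identifies each $M \in \mathcal{M}_{m,n;k}$ with a family of non-intersecting lattice paths, and hence with a plane partition $\pi \in \mathcal{PP}(m-k+1,n-k+1,k-1)$, and then to reinterpret all of the statistics $\mathsf{v}(M)$, $\mathsf{v}_d(M)$, and $\mathsf{d}_\ell(M)$ intrinsically in terms of $\pi$. The first and most important step is to verify that $\mathsf{v}(M)$ coincides with the volume $|\pi|$ of the associated plane partition. Each zero entry $M_{i,j}$ with $\mathsf{v}(M_{i,j})=v$ should correspond to a column of cells of height $v$ in the pile-of-cubes picture of $\pi$; this can be checked by tracing the zigzag decomposition of $M$ through the encoding $(\lambda^s(\pi))_{s=1}^{k-1}$ of Section \ref{sec.mainPP}. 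Once this is established, specializing $t=1$ in \eqref{eq.gf} reduces the left-hand side to $\sum_\pi q^{|\pi|}$ and the right-hand side to the classical $q$-MacMahon product, providing a sanity check.

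Next, I would analyze the role of $t$. The diagonal zero statistic $\mathsf{v}_d(M)$ should correspond to the contribution to $|\pi|$ coming from cells along the main diagonal of $\pi$, while each $\mathsf{d}_\ell(M)$ measures how far the $\ell$-th path runs along the diagonal region before crossing it, i.e., a local ``diagonal length'' of the $\ell$-th layer. Expanding the $q$-Pochhammer ratio
\begin{equation*}
    \frac{(q^{k-\ell};q)_{\mathsf{d}_\ell(M)}}{(tq^{k-\ell};q)_{\mathsf{d}_\ell(M)}} = \prod_{i=0}^{\mathsf{d}_\ell(M)-1} \frac{1-q^{k-\ell+i}}{1-tq^{k-\ell+i}}
\end{equation*}
as a product of geometric-type series naturally completes the ``partial'' $t$-weight $t^{\mathsf{v}_d(M)}$ into a full trace-refined weight on $\pi$. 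The upshot should be that the left-hand side of \eqref{eq.gf} equals the trace-refined generating function
\begin{equation*}
    \sum_{\pi\in\mathcal{PP}(m-k+1,n-k+1,k-1)} q^{|\pi|} \cdot (\text{closed-form factor in } t, q),
\end{equation*}
which is a known product, matching the right-hand side by a standard identity (in the spirit of Gessel, Krattenthaler, or Okounkov--Reshetikhin trace-refined box formulas).

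To execute this, I would apply the Lindström--Gessel--Viennot lemma to the weighted NILP sum to obtain a $(k-1)\times(k-1)$ determinant whose entries are one-path weighted sums, each expressible as a $q$-hypergeometric quantity, and then evaluate the determinant via a Cauchy-type identity; an alternative route is induction on $k$, peeling off the longest zigzag $p_1$ and reducing the remaining weighted sum to the $k-1$ case. The main obstacle will be the precise translation of the Pochhammer factors $(q^{k-\ell};q)_{\mathsf{d}_\ell(M)}/(tq^{k-\ell};q)_{\mathsf{d}_\ell(M)}$ into a natural per-edge weight on the paths: these factors depend on a path-length statistic rather than on individual steps, and arranging the bookkeeping so that all of the $t$-powers telescope correctly across the $k-1$ layers is the delicate point. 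Once this dictionary is set, the remaining steps are routine generating function manipulation and determinant evaluation.
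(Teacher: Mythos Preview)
Your overall strategy---transport the statistics through the bijection of Theorem~\ref{thm1} and then appeal to a known plane-partition identity---is exactly what the paper does. Your dictionary is also essentially right: under the bijection $M \leftrightarrow \pi$, the statistic $\mathsf{v}(M)$ becomes the volume $\mathsf{vol}(\pi)=\sum_{i,j}\pi_{i,j}$, the diagonal statistic $\mathsf{v}_d(M)$ becomes the trace $\mathsf{tr}(\pi)=\sum_i \pi_{i,i}$, and $\mathsf{d}_\ell(M)$ is precisely the size of the Durfee square of the $\ell$th layer $\lambda^\ell(\pi)$ (your description of it as a ``diagonal length'' is vague; pinning it down as the Durfee square size is what makes the translation clean).

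Where you diverge from the paper is in the last step. Once the dictionary is in place, the left-hand side of \eqref{eq.gf} is literally
\[
\sum_{\pi \in \mathcal{PP}(a,b,c)} q^{\mathsf{vol}(\pi)} t^{\mathsf{tr}(\pi)} \prod_{\ell=1}^{c} \frac{(q^{c+1-\ell};q)_{\mathsf{d}_\ell(\pi)}}{(tq^{c+1-\ell};q)_{\mathsf{d}_\ell(\pi)}}
\]
with $(a,b,c)=(m-k+1,n-k+1,k-1)$, and this is \emph{exactly} Kamioka's bounded trace formula \cite[Theorem~9]{Kamioka15}. The paper simply cites this result and is done. Your proposal instead outlines a from-scratch derivation via LGV determinants or induction on $k$; that would amount to re-proving Kamioka's theorem, which is genuinely non-trivial (his proof uses biorthogonal polynomials and lattice-path weights that are not obvious to guess). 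The ``delicate point'' you flag---getting the Pochhammer factors to arise from per-edge weights---is precisely the content of Kamioka's paper, not a routine bookkeeping exercise. So your approach is not wrong, but it is far more work than needed: identify the Durfee-square interpretation of $\mathsf{d}_\ell$, cite Kamioka, and the corollary follows immediately.
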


Before proving Corollary \ref{cor.product}, we note that when specializing $t=1$, \eqref{eq.gf} reduces to the following product formula
\begin{equation}\label{eq.matrixvolume}
    \sum_{M \in \mathcal{M}_{m,n;k}}  q^{\mathsf{v}(M)} = \prod_{i=1}^{m-k+1} \prod_{j=1}^{n-k+1} \prod_{\ell=1}^{k-1} \frac{1-q^{i+j+\ell-1}}{1-q^{i+j+\ell-2}}.
\end{equation}
Moreover, if we take $t=1$ and the limit $q \rightarrow 1$, then \eqref{eq.gf} simplifies to \eqref{eq.thm1}.
\begin{example}
    Consider maximal $I_3$-avoiding $(0,1)$-matrices of size $3 \times 4$. The six matrices, their corresponding statistics, and weights are listed in Table \ref{tab.gf}. Recall that $\mathsf{d}_1(M)$ and $\mathsf{d}_2(M)$ are given by $\mathsf{w}(M_{j,j})$ where the diagonal entries $M_{j,j}$ are marked red in each matrix. A direct calculation shows that the sum of these six terms equals
    \begin{equation}
        \frac{1-tq^3}{1-tq} \frac{1-tq^4}{1-tq^2}.
    \end{equation}
    This agrees with the right-hand side of \eqref{eq.gf} when $m=3,n=4$, and $k=3$.
    \begin{table}[htb!]
        \renewcommand{\arraystretch}{1.1}
        \centering
        \begin{tabular}{c|c|c|c|c}
          Matrix $M$  & $\mathsf{v}(M)$ & $\mathsf{v}_d(M)$ & $(\mathsf{d}_1(M),\mathsf{d}_2(M))$ & Weight \\
        \hline\hline
        $\left[
        \begin{array}{c|c|c|c}
          \textcolor{red}{1} & 1 & 1 & 1 \\
          \hline
          1 & \textcolor{red}{1} & 1 & 1 \\
          \hline
          1 & 1 & 0 & 0 
        \end{array}
        \right]$
        & $0$ & $0$ & $(0,0)$ & $1$ \\
        \hline\hline
        $\left[
        \begin{array}{c|c|c|c}
          \textcolor{red}{1} & 1 & 1 & 1 \\
          \hline
          1 & 0 & 1 & 1 \\
          \hline
          1 & 1 & \textcolor{red}{1} & 0 
        \end{array}
        \right]$ 
        & $1$ & $1$ & $(1,0)$ & $qt\frac{1-q^2}{1-tq^2}$ \\
        \hline\hline
        $\left[
        \begin{array}{c|c|c|c}
          \textcolor{red}{1} & 1 & 1 & 1 \\
          \hline
          1 & 0 & 0 & 1 \\
          \hline
          1 & 1 & \textcolor{red}{1} & 1 
        \end{array}
        \right]$  
        & $2$ & $1$ & $(1,0)$ & $q^2t\frac{1-q^2}{1-tq^2}$ \\
        \hline\hline
        $\left[
        \begin{array}{c|c|c|c}
          0 & 1 & 1 & 1 \\
          \hline
          1 & \textcolor{red}{1} & 1 & 1 \\
          \hline
          1 & 1 & \textcolor{red}{1} & 0 
        \end{array}
        \right]$
        & $2$ & $2$ & $(1,1)$ & $q^2t^2\frac{1-q^2}{1-tq^2}\frac{1-q}{1-tq}$ \\
        \hline\hline
        $\left[
        \begin{array}{c|c|c|c}
          0 & 1 & 1 & 1 \\
          \hline
          1 & \textcolor{red}{1} & 0 & 1 \\
          \hline
          1 & 1 & \textcolor{red}{1} & 1 
        \end{array}
        \right]$
        & $3$ & $2$ & $(1,1)$ & $q^3t^2\frac{1-q^2}{1-tq^2}\frac{1-q}{1-tq}$ \\
        \hline\hline
        $\left[
        \begin{array}{c|c|c|c}
          0 & 0 & 1 & 1 \\
          \hline
          1 & \textcolor{red}{1} & 1 & 1 \\
          \hline
          1 & 1 & \textcolor{red}{1} & 1 
        \end{array}
        \right]$
        & $4$ & $2$ & $(1,1)$ & $q^4t^2\frac{1-q^2}{1-tq^2}\frac{1-q}{1-tq}$ \\
        \end{tabular}
        \vspace{2mm}
        \caption{The six matrices in $\mathcal{M}_{3,4;3}$, their corresponding statistics, and the weights in the generating function.}
        \label{tab.gf}
    \end{table}
\end{example}

\begin{proof}[Proof of Corollary \ref{cor.product}]
    By Theorem \ref{thm1}, there is a bijection between $\mathcal{M}_{m,n;k}$ and $\mathcal{PP}(m-k+1,n-k+1,k-1)$ via non-intersecting lattice paths. Under this bijection, the key observation is that the zero entries in a matrix $M \in \mathcal{M}_{m,n;k}$ correspond to the top unit squares of a pile of unit cubes (where paths do not traverse these unit squares). For a zero entry $M_{i,j}=0$ of $M$, the statistic $\mathsf{v}(M_{i,j})$ counts the number of paths ``below'' it, which is exactly the number of unit cubes in that stack of unit cubes. It turns out that $\mathsf{v}(M)$ counts the total number of unit cubes of the plane partition $\pi$ corresponding to $M$, this is called the \emph{volume} of $\pi$, denoted by $\mathsf{vol}(\pi) = \sum_{i,j} \pi_{i,j}$. Similarly, the quantity $\mathsf{v}_{d}(M)$ sums over the diagonal entries of $M$, this is called the \emph{trace} of $\pi$, denoted by $\mathsf{tr}(\pi) = \sum_{i} \pi_{i,i}$. 

    Next, we assume the zigzag path $p_j^{\prime}$ meets the diagonal entry of $M$ at $M_{i_j,i_j}$. It is clear that, under the bijection, $p_j^{\prime}$ depicts the shape of the $j$th layer of the corresponding plane partition $\pi$, that is, $\lambda^j(\pi)$. Moreover, the statistic $\mathsf{d}_{j}(M)=\mathsf{w}(M_{i_j,i_j})$ counts the size of the \emph{Durfee square}\footnote{A \emph{Durfee square} of a partition $\lambda$ is the maximal square contained in the Young diagram of $\lambda$.} of the partition $\lambda^j(\pi)$. 

    Our result follows directly from the triple product formula for the generating function of plane partitions with bounded parts due to Kamioka \cite[Theorem 9]{Kamioka15}, which is given by
    \begin{equation}\label{eq.Kamioka}
        \sum_{\pi \in \mathcal{PP}(a,b,c)}  q^{\mathsf{vol}(\pi)} t^{\mathsf{tr}(\pi)} \prod_{\ell = 1}^{\pi_{1,1}} \frac{(q^{c+1-\ell}  ; q)_{\mathsf{d}_{\ell}(\pi)}}{(tq^{c+1-\ell} ; q)_{\mathsf{d}_{\ell}(\pi)}} = \prod_{i=1}^{a} \prod_{j=1}^{b} \prod_{\ell=1}^{c} \frac{1-tq^{i+j+\ell-1}}{1-tq^{i+j+\ell-2}},
    \end{equation}
    where $\mathsf{d}_{\ell}(\pi)$ is the size of the Durfee square of the partition $\lambda^{\ell}(\pi)$.
    
    We remark that the upper range $\pi_{1,1}$ of the product on the left-hand side of \eqref{eq.Kamioka} can be replaced by $c$. If the largest part $\pi_{1,1}$ is less than the height $c$ of the box, then for $\pi_{1,1} < j \leq c$, the $j$th layer of this plane partition is empty. Thus, the quantity $\mathsf{d}_{j}(\pi) = 0$, which contributes $1$ in the product. 

    Based on the above discussion, taking $a=m-k+1$, $b=n-k+1$, and $c=k-1$ in \eqref{eq.Kamioka} completes the proof.   
\end{proof}
\begin{remark}
    Note that specializing $t=1$, \eqref{eq.matrixvolume} is a counterpart of the classical result of MacMahon \cite{MacM}. He showed the following elegant product formula for the volume generating function of $\mathcal{PP}(a,b,c)$:
    \begin{equation}\label{eq.PPvolume}
        \sum_{\pi \in PP(a,b,c)} q^{\mathsf{vol}(\pi)}
         = \prod_{i=1}^{a} \prod_{j=1}^{b} \prod_{\ell=1}^{c} \frac{1-q^{i+j+\ell-1}}{1-q^{i+j+\ell-2}}.
    \end{equation}
    The generating function concerning both the volume and trace of plane partitions has been studied by Stanley \cite[Section 19]{Stan71} back to the 1970s. He proved the following formula for unbounded plane partitions $\mathcal{PP}(a,b,\infty)$, that is, the original condition $\pi_{i,j} \leq c$ is omitted. 
    \begin{equation}\label{eq.volumetrace}
        \sum_{\pi \in PP(a,b,\infty)} q^{\mathsf{vol}(\pi)}t^{\mathsf{tr}(\pi)} = \prod_{i=1}^{a} \prod_{j=1}^{b} \frac{1}{1-tq^{i+j-1}}.
    \end{equation}
    However, we do not have an analogue result of matrices in $\mathcal{M}_{m,n;k}$. Under the bijection between $\mathcal{PP}(a,b,c)$ and $\mathcal{M}_{m,n;k}$, as $c$ goes to infinity, both $m = a+c,n=b+c$, and $k=c+1$ approach infinity. Studying infinite matrices avoiding an infinite pattern is not the main focus of this paper, this direction is left to the interested reader.
\end{remark}
\begin{remark}
    We would like to point out that Gansner \cite{Gansner81-1,Gansner81-2} extended Stanley's generating function \eqref{eq.volumetrace} of unbounded plane partitions by introducing \emph{$\ell$-traces}, that is, the trace on the $\ell$th diagonal of a plane partition. Kamioka (\cite[Theorem 17]{Kamioka15} and \cite[Theorem 9]{Kamioka17}) also showed a refined version of \eqref{eq.Kamioka} by considering $\ell$-traces. Under our bijection, we can obtain a similar product formula for the generating function by including the idea of $\ell$-traces to matrices in $\mathcal{M}_{m,n;k}$. However, the weight presented by Kamioka (the term $w_{r,n}(\pi)$ in \cite[Theorem 9]{Kamioka17}) looks complicated and unnatural so we do not present an analogue result for matrices in $\mathcal{M}_{m,n;k}$.
\end{remark}

\section{Symmetry classes}\label{sec.sym}

In Theorem \ref{thm1}, we showed that maximal $I_k$-avoiding $(0,1)$-matrices of size $m \times n$ (IAMs) are equinumerous with plane partitions of size $(m-k+1) \times (n-k+1) \times (k-1)$. However, the structure of their symmetry classes is different. We first recall the ten symmetry classes of plane partitions in Section \ref{sec.ppsym}. The proof of Theorem \ref{thm2} is given in Section \ref{sec.pfthm2}.

\subsection{Ten symmetry classes of plane partitions}\label{sec.ppsym}

The study of symmetry classes of plane partitions dates back to MacMahon \cite{Mac1899} and gained further attention when Stanley \cite{Stan86pp} identified ten symmetry classes through the following three operations on plane partitions. Let $\pi=(\pi_{i,j}) \in PP(a,b,c)$ be a plane partition.
\begin{itemize}
    \item \emph{Reflection} of $\pi$: $\mathsf{re}(\pi)$ reflects along the main diagonal of $\pi$, that is, $\mathsf{re}(\pi_{i,j})=\pi_{j,i}$ for all $i$ and $j$.

    \item \emph{Rotation by $120^{\circ}$} of $\pi$: $\mathsf{ro}(\pi)$ is given by viewing $\pi$ as a pile of unit cubes and then rotating it by $120^{\circ}$ with the rotational axis $\{(t,t,t)| t \in \mathbb{R}\}$.

    \item \emph{Complement} of $\pi$: $\mathsf{co}(\pi_{i,j}) = c-\pi_{a+1-i,b+1-j}$. If we view $\pi$ as a pile of cubes, then taking the complement of $\pi$ is the set-theoretic complement inside the $a \times b \times c$ box.
\end{itemize}
These three operations generate a group which is isomorphic to the dihedral group $D_{12}$ of order $12$. The ten conjugacy classes of subgroups of $D_{12}$ corresponding to ten symmetry classes of plane partitions are shown in the following list.
\begin{itemize}
    \item[(U)] \emph{Unrestricted} plane partitions.
    
    \item[(S)] \emph{Symmetric} plane partitions that are invariant under the reflection along the main diagonal.
    
    \item[(CS)] \emph{Cyclically symmetric} plane partitions that are invariant under a $120^{\circ}$ rotation.
    
    \item[(TS)] \emph{Totally symmetric} plane partitions that are both symmetric and cyclically symmetric.
    
    \item[(SC)] \emph{Self-complementary} plane partitions that are invariant under taking the complement.
    
    \item[(TC)] \emph{Transpose-complementary} plane partitions that are equal to the reflection of their complement.
    
    \item[(SSC)] \emph{Symmetric Self-complementary} plane partitions that are symmetric and self-complementary.

    \item[(CSTC)] \emph{Cyclically symmetric transpose-complementary} plane partitions that are both cyclically symmetric and transpose-complementary.

    \item[(CSSC)] \emph{Cyclically symmetric self-complementary} plane partitions that are both cyclically symmetric and self-complementary.

    \item[(TSSC)] \emph{Totally symmetric self-complementary} plane partitions that are both symmetric, cyclically symmetric, and self-complementary.
\end{itemize}
For each symmetry class $* = U, S, CS,\dots$, let $\mathcal{PP}^{*}(a,b,c)$ denote the set of plane partitions of size $a \times b \times c$ under the symmetry $*$.

Surprisingly, the enumeration formula of each symmetry class of plane partitions has a simple product formula. It took mathematicians decades of effort to make a conjecture and prove these formulas. For more details, we refer the reader to \cite[Section 6]{Kra15S} and references therein.

\subsection{Ten symmetry classes of IAMs}\label{sec.pfthm2}

The proof of Theorem \ref{thm2} is presented below.
\begin{proof}[Proof of Theorem \ref{thm2}.]
By Theorem \ref{thm1}, we will view a matrix $M \in \mathcal{M}_{m,n;k}$ as both non-intersecting lattice paths and plane partitions in the proof.
\begin{enumerate}
    \item The unrestricted case follows directly from \eqref{eq.thm1}.
    
    \item The diagonally symmetric IAMs exist when $m=n$. Under the bijection, it is easy to see that the matrices in this class correspond to non-intersecting paths on the square lattice that are symmetric about the line $x+y=n-1$. Thus, they correspond to symmetric plane partitions of size $(n-k+1) \times (n-k+1) \times (k-1)$. Therefore, 
    \begin{equation}
        |\mathcal{M}^{DS}_{n,n;k}| = |\mathcal{PP}^{S}(n-k+1,n-k+1,k-1)| = \prod_{1 \leq i \leq j \leq n-k+1} \frac{k+i+j-2}{i+j-1},
    \end{equation}
    where the last equality was conjectured by MacMahon \cite{Mac1899} and proved by Andrews \cite{Andrews77}.

    \item For anti-diagonally symmetric IAMs, one requires $m=n$ and $k$ is odd. If $M$ is an anti-diagonally symmetric IAM and avoids $I_{k}$, where $k=2k^{\prime}$, then $M$ is in bijection with $2k^{\prime}-1$ non-intersecting lattice paths on the square lattice which are symmetric about the line $y=x$. This is impossible because no path can start and end at the points on the axis of symmetry. 

    Assume $k$ is odd. In this class, the entry $M_{i,j}$ is zero if and only if the entry $M_{n+1-j,n+1-i}$ is zero. Assume $\mathsf{v}(M_{n+1-j,n+1-i}) = x$, that is, there are $x$ zigzag paths that pass through the lower right of $M_{n+1-j,n+1-i}$. Due to symmetry, there are $x$ zigzag paths that pass through the upper left of $M_{i,j}$, and this results in $k-1-2x$ zigzag paths passing through the middle of these two entries. So, $\mathsf{v}(M_{i,j}) + \mathsf{v}(M_{n+1-j,n+1-i}) = k-1$. One can see that the corresponding entries in the plane partition have the property that they are equal to the reflection of their complement. In other words, the IAMs in this class correspond to the transpose-complementary plane partitions. 
    \begin{align}
        |\mathcal{M}^{AS}_{n,n;k}| & = |\mathcal{PP}^{TC}(n-k+1,n-k+1,k-1)| \nonumber\\
        &= \displaystyle 
                \binom{n-\frac{k+1}{2}}{n-k}\prod_{1 \leq i \leq j \leq n-k-1} \frac{k+i+j}{i+j+1},
    \end{align}
    where the last equality was proved by Proctor \cite{Proc84}.

    \item The diagonally and anti-diagonally symmetric IAMs are the intersection of the previous two classes, one requires $m=n$ and $k$ is odd. The corresponding plane partitions lie in the intersection of the set of symmetric plane partitions and the set of transpose-complementary plane partitions. We claim that their intersection is the set of symmetric self-complementary plane partitions.
    \begin{equation}
        \mathcal{PP}^{S}(a,a,2c) \cap \mathcal{PP}^{TC}(a,a,2c) = \mathcal{PP}^{SSC}(a,a,2c).
    \end{equation}
    If $\pi \in \mathcal{PP}^{S}(a,a,2c) \cap \mathcal{PP}^{TC}(a,a,2c)$, then $\mathsf{re}(\pi) = \pi$ and $\mathsf{co}(\pi) = \mathsf{re}(\mathsf{re}(\mathsf{co}(\pi))) = \mathsf{re}(\pi) = \pi$. So, $\pi \in \mathcal{PP}^{SSC}(a,a,2c)$. The other direction is obvious. 

    Finally, due to Proctor \cite{Proc83} again, we obtain for $k$ odd,
    \begin{align}
        |\mathcal{M}_{n,n;k}^{DAS}| & = |\mathcal{PP}^{SSC}(n-k+1,n-k+1,k-1)| \nonumber \\
        & =\begin{cases}
            H(\frac{n-k+2}{2}, \frac{n-k}{2}, \frac{k-1}{2})    , &\text{ if $n$ is odd,}\\
            H(\frac{n-k+1}{2}, \frac{n-k+1}{2}, \frac{k-1}{2})    , &\text{ if $n$ is even.}
            \end{cases} 
    \end{align}

    \item For half-turn symmetric IAMs, the entry $M_{i,j}$ is zero if and only if the entry $M_{m+1-i,n+1-j}$ is zero. Also, the corresponding non-intersecting lattice paths are invariant under a rotation by $180^{\circ}$. By a similar argument for the anti-diagonally symmetric IAMs, we obtain $\mathsf{v}(M_{i,j}) + \mathsf{v}(M_{m+1-i,n+1-j}) = k-1$. One can see that the corresponding entries in the plane partition have the property that they are equal to their complement. Hence, the IAMs in this class correspond to the self-complementary plane partitions. 

    The enumeration of self-complementary plane partitions was proved by Stanley \cite{Stan86pp},
    \begin{equation}\label{eq.ppsymSC}
        |\mathcal{PP}^{SC}(a,b,c)| = \begin{cases}
                H(\frac{a}{2},\frac{b}{2},\frac{c}{2})^2 , &\text{ for $a,b,c$ even,} \\
                H(\frac{a}{2},\frac{b+1}{2},\frac{c-1}{2})H(\frac{a}{2},\frac{b-1}{2},\frac{c+1}{2}) , &\text{ for $a$ even and $b,c$ odd,} \\
                H(\frac{a+1}{2},\frac{b}{2},\frac{c}{2})H(\frac{a-1}{2},\frac{b}{2},\frac{c}{2}) , &\text{ for $a$ odd and $b,c$ even.}
                \end{cases}
    \end{equation}
    Note that self-complementary plane partitions contain exactly half of the unit cubes in the box, so there is no self-complementary plane partition with $a,b,c$ odd.
    
    In our case, we need to find the parities of three numbers $m-k+1,n-k+1$, and $k-1$, and then obtain the counting formula by substituting $a,b,c$ carefully in \eqref{eq.ppsymSC}. Note that permuting the parameters $x,y,z$ does not change the value of the function $H(x,y,z)$. For example, if $k$ is even and $m,n$ are odd, then $m-k+1,n-k+1$ are even and $k-1$ is odd. We use the third equation in \eqref{eq.ppsymSC}, taking $a=k-1,b=n-k+1,c=m-k+1$ and rearranging these three parameters yield the first case in \eqref{eq.symHTSeven}. The remaining cases can be shown similarly; their arguments are omitted here.

    \item For the remaining symmetry classes, let $M \in \mathcal{M}^{*}_{m,n;k}$, where $* = HS, VS, VHS, QTS$, or $TS$. Due to symmetry, the four staircases of size $k-1$ located at the four corners of $M$ must be filled with $1$'s. The zigzag paths must contain the $1$'s in the upper left and lower right staircases. This results in only one way to arrange these zigzag paths, and the number of such zigzag paths must be even (see Figure \ref{fig.matrixsym} for an illustration). This implies that $k$ needs to be an odd number. 
    
    Therefore, 
    \begin{equation}
        |\mathcal{M}_{m,n;k}^{VS}|=|\mathcal{M}_{m,n;k}^{HS}| = |\mathcal{M}_{m,n;k}^{VHS}| = |\mathcal{M}_{n,n;k}^{QTS}| = |\mathcal{M}_{n,n;k}^{TS}| = \begin{cases}
            1, & \text{if $k$ is odd,}\\
            0, & \text{if $k$ is even.}
        \end{cases}.
    \end{equation}

    \begin{figure}[hbt!]
        \centering
        $\left[
            \begin{array}{c|c|c|c|c|c|c|c|c|c|c|c}
             \cellcolor{gray}{\textcolor{blue}{1}} & \cellcolor{gray}{\textcolor{blue}{1}} & \cellcolor{gray}{\textcolor{blue}{1}} & \cellcolor{gray}{\textcolor{blue}{1}} & \cellcolor{gray}{1} & \cellcolor{gray}{1} & \cellcolor{gray}{1} & \cellcolor{gray}{1} & \cellcolor{gray}{\textcolor{blue}{1}} & \textcolor{blue}{1} & \textcolor{blue}{1} & \textcolor{blue}{1} \\
             \hline
             \cellcolor{gray}{\textcolor{blue}{1}} & \cellcolor{lightgray}{\textcolor{blue}{1}} & \cellcolor{lightgray}{\textcolor{blue}{1}} & \cellcolor{lightgray}{1} & \cellcolor{lightgray}{1} & \cellcolor{lightgray}{1} & \cellcolor{lightgray}{1} & \cellcolor{lightgray}{1} & \cellcolor{lightgray}{1} & \cellcolor{lightgray}{\textcolor{blue}{1}} & \textcolor{blue}{1} & \textcolor{blue}{1} \\
             \hline
             \cellcolor{gray}{\textcolor{blue}{1}} & \cellcolor{lightgray}{\textcolor{blue}{1}} & 0 & 0 & 0 & 0 & 0 & 0 & 0 & 0 & \cellcolor{orange}{\textcolor{blue}{1}} & \textcolor{blue}{1} \\
             \hline
             \cellcolor{gray}{\textcolor{blue}{1}} & \cellcolor{lightgray}{1} & 0 & 0 & 0 & 0 & 0 & 0 & 0 & 0 & \cellcolor{orange}{1} & \cellcolor{pink}{\textcolor{blue}{1}} \\
             \hline
             \cellcolor{gray}{1} & \cellcolor{lightgray}{1} & 0 & 0 & 0 & 0 & 0 & 0 & 0 & 0 & \cellcolor{orange}{1} & \cellcolor{pink}{1} \\
             \hline
             \cellcolor{gray}{\textcolor{blue}{1}} & \cellcolor{lightgray}{1} & 0 & 0 & 0 & 0 & 0 & 0 & 0 & 0 & \cellcolor{orange}{1} & \cellcolor{pink}{\textcolor{blue}{1}} \\
             \hline
             \textcolor{blue}{1} & \cellcolor{lightgray}{\textcolor{blue}{1}} & 0 & 0 & 0 & 0 & 0 & 0 & 0 & 0 & \cellcolor{orange}{\textcolor{blue}{1}} & \cellcolor{pink}{\textcolor{blue}{1}} \\
             \hline
             \textcolor{blue}{1} & \textcolor{blue}{1} & \cellcolor{orange}{\textcolor{blue}{1}} & \cellcolor{orange}{1} & \cellcolor{orange}{1} & \cellcolor{orange}{1} & \cellcolor{orange}{1} & \cellcolor{orange}{1} & \cellcolor{orange}{1} & \cellcolor{orange}{\textcolor{blue}{1}} & \cellcolor{orange}{\textcolor{blue}{1}} & \cellcolor{pink}{\textcolor{blue}{1}} \\
             \hline
             \textcolor{blue}{1} & \textcolor{blue}{1} & \textcolor{blue}{1} & \cellcolor{pink}{\textcolor{blue}{1}} & \cellcolor{pink}{1} & \cellcolor{pink}{1} & \cellcolor{pink}{1} & \cellcolor{pink}{1} & \cellcolor{pink}{\textcolor{blue}{1}} & \cellcolor{pink}{\textcolor{blue}{1}} & \cellcolor{pink}{\textcolor{blue}{1}} & \cellcolor{pink}{\textcolor{blue}{1}}
            \end{array}
        \right]$
        \caption{A matrix in $\mathcal{M}^{*}_{9,12;5}$, where $* = HS, VS, VHS, QTS$, or $TS$. The $1$'s in the four staircases are marked blue. The four zigzag paths are presented in four different colors.}
        \label{fig.matrixsym}
    \end{figure}
    \end{enumerate}
    This completes the proof of Theorem \ref{thm2}. 
\end{proof}

In the work of Ciucu \cite{Ciucu16}, four intriguing product relations for the number of symmetry classes of plane partitions have been discovered. We close this section by pointing out similar product relations of the numbers of five symmetry classes of IAMs.
\begin{corollary}\label{cor.productrelation}
    Let $n,k$ be positive integers with $2 \leq 2k-1 \leq n$. Then
    \begin{enumerate}
        \item For unrestricted IAMs, diagonally symmetric IAMs, and anti-diagonally symmetric IAMs, they satisfy
        \begin{equation}\label{eq.symprod1}
            |\mathcal{M}^{U}_{n,n;2k-1}| = |\mathcal{M}^{DS}_{n,n;2k-1}| \cdot |\mathcal{M}^{AS}_{n,n;2k-1}|.
        \end{equation}

        \item For half-turn symmetric IAMs, and diagonally and anti-diagonally symmetric IAMs, they satisfy
        \begin{equation}\label{eq.symprod2}
            |\mathcal{M}^{HTS}_{n,n;2k-1}| = |\mathcal{M}^{DAS}_{n,n;2k-1}|^2.
        \end{equation}  
    \end{enumerate}
\end{corollary}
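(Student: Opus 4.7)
The plan is to reduce both identities to product relations among symmetry classes of plane partitions, using the bijection of Theorem \ref{thm1} together with the five concrete identifications made during the proof of Theorem \ref{thm2}. Once the translation is carried out, both statements become instances of the product identities for symmetry classes of plane partitions established by Ciucu \cite{Ciucu16}, so no new enumerative argument is really needed beyond bookkeeping.

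First, I would fix $K := 2k-1$ and set $a := n-K+1 = n-2k+2$, $c := K-1 = 2k-2$. Note that $c$ is always even. By Theorem \ref{thm1}, $\mathcal{M}_{n,n;K}^{U}$ is in bijection with $\mathcal{PP}(a,a,c)$, and the proof of Theorem \ref{thm2} identifies (via the same non-intersecting lattice-path picture) the symmetry classes
\begin{equation*}
    \mathcal{M}^{DS}_{n,n;K} \leftrightarrow \mathcal{PP}^{S}(a,a,c),\qquad
    \mathcal{M}^{AS}_{n,n;K} \leftrightarrow \mathcal{PP}^{TC}(a,a,c),
\end{equation*}
\begin{equation*}
    \mathcal{M}^{HTS}_{n,n;K} \leftrightarrow \mathcal{PP}^{SC}(a,a,c),\qquad
    \mathcal{M}^{DAS}_{n,n;K} \leftrightarrow \mathcal{PP}^{SSC}(a,a,c).
\end{equation*}
I would state these correspondences as a short lemma (essentially a recap of the relevant cases of the proof of Theorem \ref{thm2}) to make the subsequent translation transparent.

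Next, I would appeal to Ciucu's product relations \cite{Ciucu16}: for $c$ even,
\begin{equation*}
    |\mathcal{PP}(a,a,c)| \;=\; |\mathcal{PP}^{S}(a,a,c)|\cdot |\mathcal{PP}^{TC}(a,a,c)|,
\end{equation*}
\begin{equation*}
    |\mathcal{PP}^{SC}(a,a,c)| \;=\; |\mathcal{PP}^{SSC}(a,a,c)|^{2}.
\end{equation*}
Plugging in the identifications above with $c = 2k-2$ immediately yields \eqref{eq.symprod1} and \eqref{eq.symprod2}. As a sanity check, I would verify the second identity by comparing the explicit formulas from Theorem \ref{thm2}: for $n$ odd, $|\mathcal{M}^{HTS}_{n,n;2k-1}| = H(\tfrac{n-2k+3}{2},\tfrac{n-2k+1}{2},k-1)\,H(\tfrac{n-2k+1}{2},\tfrac{n-2k+3}{2},k-1)$, which equals $H(\tfrac{n-k+2}{2},\tfrac{n-k}{2},\tfrac{k-1}{2})^{2}$-style expressions that match $|\mathcal{M}^{DAS}_{n,n;2k-1}|^{2}$; analogously for $n$ even.

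The main obstacle is mostly notational: aligning the parity hypotheses of Ciucu's identities (which are stated for $c$ even) with the restriction $K=2k-1$ in the corollary, and making sure the two ``diagonal'' symmetry classes on the plane-partition side (symmetric along the main diagonal vs.\ transpose-complementary) correctly reflect the two matrix symmetries (DS along the main diagonal vs.\ AS along the anti-diagonal). Both checks are already implicit in the proof of Theorem \ref{thm2}, so once those identifications are invoked, the corollary follows from Ciucu's identities with no further computation. If one prefers a self-contained verification, then in place of citing \cite{Ciucu16} one can simply multiply the explicit product formulas \eqref{eq.symDS}, \eqref{eq.symAS}, \eqref{eq.symDAS}, \eqref{eq.symHTSodd} (the odd-$k$ branch) and cancel terms; this is mechanical but routine.
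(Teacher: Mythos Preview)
Your proposal is correct and aligns with the paper: the paper simply states that these relations ``can be verified directly from the formula presented in Theorem~\ref{thm2}'' and leaves the proof to the reader, after noting that they parallel Ciucu's product relations \cite{Ciucu16}. You carry out precisely this: you translate via the bijections established in the proof of Theorem~\ref{thm2} to the corresponding plane-partition identities (which is the conceptual route via \cite{Ciucu16}), and you also note the mechanical alternative of multiplying out the explicit formulas \eqref{eq.symU}--\eqref{eq.symHTSodd}, which is exactly what the paper invites the reader to do.
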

These relations can be verified directly from the formula presented in Theorem \ref{thm2}, the proof of Corollary \ref{cor.productrelation} is left to the reader. However, no bijective proof is currently known for the product relations among the symmetry classes of both plane partitions and IAMs. We hope the zigzag path decomposition of a matrix studied in this paper could bring new ideas to establish a bijection. An open problem is proposed.
\begin{problem}\label{prob1}
    Find a bijective proof of the equations \eqref{eq.symprod1} and \eqref{eq.symprod2}.
\end{problem}

\section{Maximal $I_k$-avoiding $(0,1)$-fillings of skew shapes}\label{sec.skew}

As an application of Theorem \ref{thm1}, we are able to enumerate the set $\mathcal{F}_{R;k}$ consisting of maximal $I_k$-avoiding $(0,1)$-fillings of the region $R$. In Section \ref{sec.maxstaircase}, we prove Theorem \ref{thm3} which states that $|\mathcal{F}_{R;k}|$ is given by the product formula, where $R = \bar{R}_{m,n;t}$ is obtained from a rectangle of size $m \times n$ by removing a maximal staircase of size $t$. In Section \ref{sec.skewshape}, we prove the conceptual formula for $|\mathcal{F}_{R;k}|$ in Theorem \ref{thm4}, where $R = \lambda/\mu$ is the Young diagram of a skew shape. 

As discussed in Section \ref{sec.mainIAM}, for each maximal $I_k$-avoiding filling of a rectangle, the entries forming the staircases of size $k-1$ located at the lower left and upper right corners of the region must be filled with $1$'s. Visualizing $1$'s as non-intersecting lattice paths, the entries along the zigzag edges of these staircases determine the endpoints of these lattice paths. Fixing the endpoints of these lattice paths, each path may go freely within the region, subject only to the non-intersecting condition. Therefore, if the region $R$ is obtained from a rectangle by deleting some entries at its upper left or lower right corner, we can still view maximal $I_k$-avoiding $(0,1)$-fillings of $R$ as families of non-intersecting paths, where each path is forbidden from passing through the deleted parts of $R$. Note that such a region $R$ can be naturally identified with the Young diagram of skew shape $\lambda/\mu$. We would like to point out that once $k$ is given, the entries forming the staircases of size $k$ must be able to fit inside the lower left and upper right corners of $R$. For skew shapes $\lambda/\mu$, this is equivalent to the conditions \eqref{eq.skewcondition}.

\subsection{Proof of Theorem \ref{thm3}}\label{sec.maxstaircase}

For the region $R=\bar{R}_{m,n;t}$ (assuming $m \leq n$), we consider the cases when $t=m-k$ or $m-k+1$, that is, we delete the staircase of maximal size from the corner. See Figure \ref{fig.removemaximal1} for a maximal $I_5$-avoiding $(0,1)$-filling of the $9 \times 12$ rectangle with the staircase of size $4$ removed from its lower right corner.

\begin{figure}[hbt!]
    \centering
    \subfigure[]{\label{fig.removemaximal1}\includegraphics[height=0.26\textwidth]{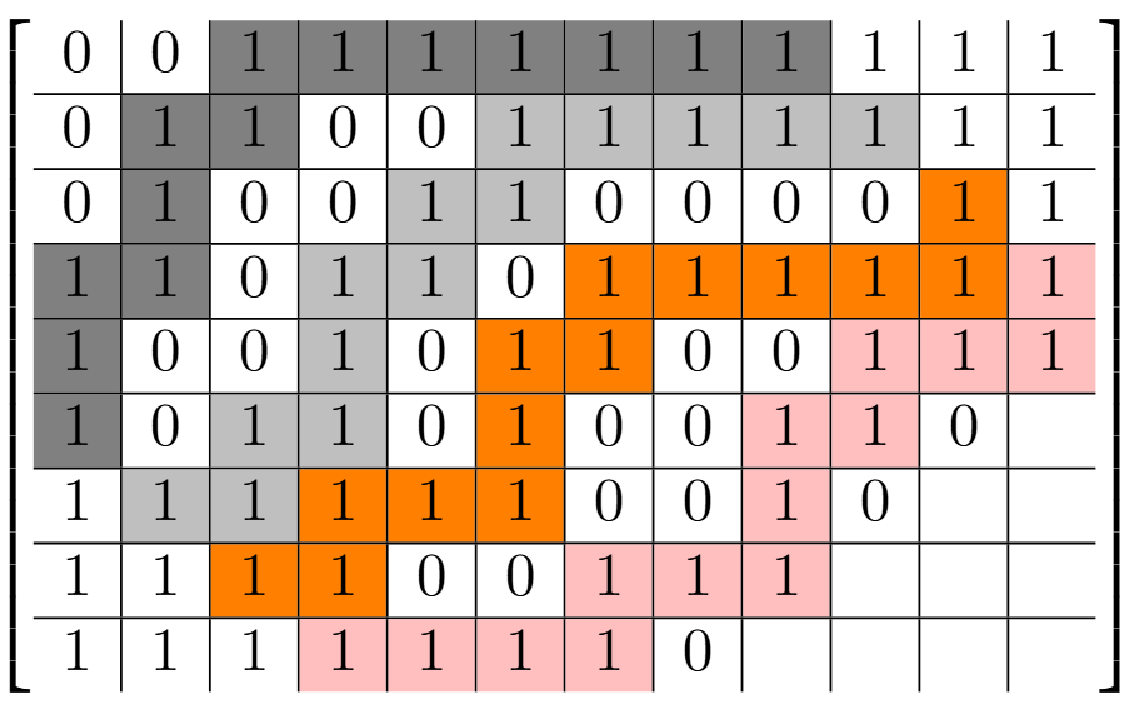}}
    \hspace{8mm}
    \subfigure[]{\label{fig.removemaximal2}
    \includegraphics[height=0.26\textwidth]{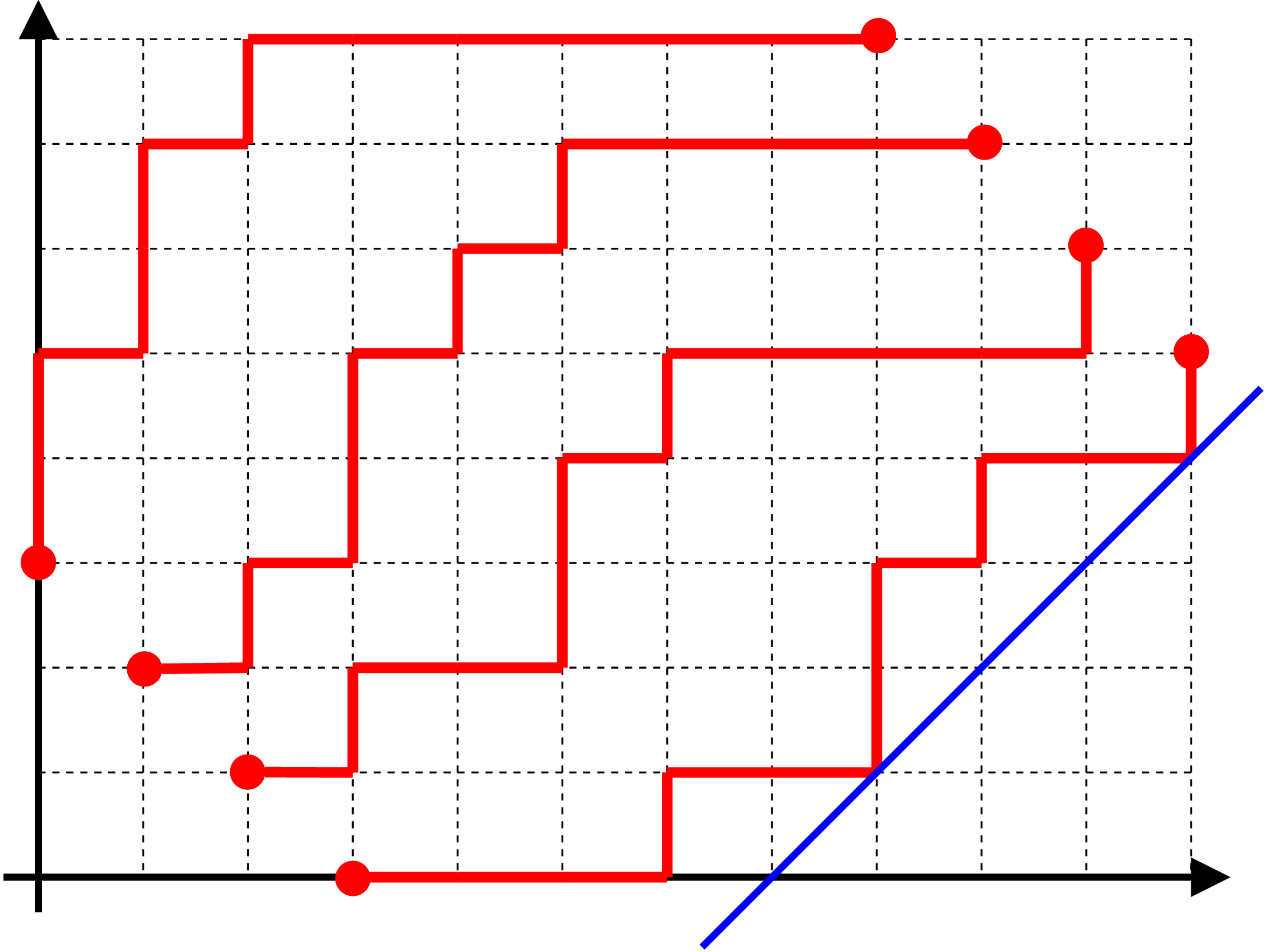}}
    \caption{(a) A maximal $I_5$-avoiding $(0,1)$-filling of $\bar{R}_{9,12;t}$, where $t=9-5$. (b) The corresponding non-intersecting lattice paths. Note that paths can only stay weakly above the blue line.}
    \label{fig.removemaximal}
\end{figure}

To prove Theorem \ref{thm3}, we need the following determinant evaluation presented by Krattenthaler; we combine two equalities and set $q=1$ in \cite[Theorem 30]{Det1}.
\begin{lemma}[Krattenthaler \cite{Det1}]\label{lem.det}
    Let $d$ be a nonnegative integer, and let $L_1,L_2,\dots,L_d$ and $A$ be indeterminates. Set $c \in \{0,1\}$. Then
    \begin{align}\label{eq.det1}
       \quad & \det \left[ \binom{A}{j-L_i} - \binom{A}{-j-L_i+c} \right]_{i,j=1}^{d} \nonumber \\ = &\prod_{i=1}^{d}\frac{(A+2i-1-c)!}{(d-L_i)!(A+d-c+L_i)!}\prod_{1 \leq i < j \leq d}(L_j-L_i) \prod_{1 \leq i \leq j \leq d}(L_i+L_j+A-c).
    \end{align}
\end{lemma}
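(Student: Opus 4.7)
The plan is to prove the determinant evaluation using Krattenthaler's \emph{identification of factors} method, which is the workhorse for determinant identities of this type. The idea is to regard both sides of \eqref{eq.det1} as polynomials in the indeterminates $A, L_1, \ldots, L_d$, establish that they have the same (multi-)degree, identify the same collection of linear factors on both sides, and finally match a single normalizing constant.

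First I would establish that both sides are polynomials and determine the relevant degree bounds. Writing $\binom{A}{k} = \frac{A(A-1)\cdots(A-k+1)}{k!}$ and treating the $L_i$'s as parameters (first integer-valued, then extended by polynomiality), I would bound the degree of the determinant in each $L_i$ and in $A$ via cofactor expansion, and check that these bounds agree with the degrees of the product on the right-hand side.

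Next, I would identify the linear factors on the right. The Vandermonde-type factors $\prod_{i<j}(L_j - L_i)$ come for free: specializing $L_i = L_j$ makes rows $i$ and $j$ coincide, so the determinant vanishes, and hence each $(L_j - L_i)$ divides it. The factors $\prod_{1 \leq i \leq j \leq d}(L_i + L_j + A - c)$ are more delicate. For $i < j$ with $L_i + L_j + A - c = 0$, I would exhibit a linear dependence between rows $i$ and $j$: the key ingredient is the polynomial reflection $\binom{A}{k} = (-1)^k \binom{k - A - 1}{k}$, which under the substitution $L_j \mapsto c - A - L_i$ essentially swaps $\binom{A}{m - L_j}$ with $\binom{A}{-m - L_j + c}$ up to a controlled sign, so that row $j$ becomes proportional to row $i$ (with overall sign $-1$, producing the desired vanishing). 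For $i = j$, the condition $2L_i + A - c = 0$ makes the map $m \mapsto -m + c$ act as an involution that sends row $i$ to its negative, forcing it to vanish identically.

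Once all the stated linear factors are accounted for, a degree comparison shows that the quotient of the two sides is independent of $L_1, \ldots, L_d$, and a further degree comparison in $A$ reduces the quotient to an absolute constant. I would pin this constant down by a convenient specialization, for instance $L_i = -i$ (which makes the determinant essentially triangular via the bottom-row entries $\binom{A}{\cdot}$ being ``nearly zero''), and then compare the explicit value with the corresponding evaluation of the product on the right. The main obstacle will be the second vanishing: producing the precise row relation when $L_i + L_j = c - A$ requires careful handling of the $(-1)^k$ signs from the reflection identity, verifying uniformity across the two cases $c = 0$ and $c = 1$, and dealing correctly with the diagonal case $i = j$ where one row (rather than a pair) must self-cancel.
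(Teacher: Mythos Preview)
The paper does not give its own proof of this lemma: it is quoted verbatim from Krattenthaler's determinant survey (specialising $q=1$ in his Theorem~30) and used as a black box in the proof of Theorem~\ref{thm3}. So there is no ``paper's proof'' to compare against beyond the citation.

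That said, your proposed route is exactly the method Krattenthaler employs in the cited reference: identification of factors, with the Vandermonde factors $(L_j-L_i)$ coming from equal rows and the factors $(L_i+L_j+A-c)$ coming from a linear dependence produced by the symmetry of binomial coefficients. One small simplification: for the second family of factors, rather than the polynomial reflection $\binom{A}{k}=(-1)^k\binom{k-A-1}{k}$ (which introduces the sign bookkeeping you flag), it is cleaner to use the symmetry $\binom{A}{k}=\binom{A}{A-k}$ directly. Under $L_j\mapsto c-A-L_i$ the $m$th entry of row~$j$ becomes $\binom{A}{c-m-L_i}-\binom{A}{m-L_i}$, which is exactly $-1$ times the $m$th entry of row~$i$; the diagonal case $i=j$ follows the same way, since then each entry pairs with itself and vanishes. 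The remaining degree count and constant-matching go through as you describe.
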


The proof of Theorem \ref{thm3} is given below.
\begin{proof}[Proof of Theorem \ref{thm3}]
    We assume $t=m-k$ or $m-k+1$. Using the idea of the proof of Theorem \ref{thm1} and the discussion at the beginning of this section, the set $\mathcal{F}_{\bar{R}_{m,n;t};k}$ of maximal $I_k$-avoiding $(0,1)$-fillings of $\bar{R}_{m,n;t}$ is in bijection with the set of $k-1$ non-intersecting lattice paths $(p^{\prime}_1,\dots,p_{k-1}^{\prime})$ with the following properties.
    \begin{itemize}
        \item $p_i^{\prime}$ starts at $u_i = (k-i-1,i-1)$ and ends at $v_i = (n-i,m-k+i)$ for each $i=1,2,\dots,k-1$, and
        \item $p_i^{\prime}$'s are lattice paths in the square grid graph that stay weakly above the line $y=x+(t-n+1)$; see Figure \ref{fig.removemaximal2} for an illustration.
    \end{itemize}

    To count the number of such non-intersecting paths with the starting points $\{u_1,\dots,u_{k-1}\}$ and the ending points $\{v_1,\dots,v_{k-1}\}$, the Lindstr\"om--Gessel--Viennot (\cite{Lin73} and \cite{GV89,GV85}) states that this number is given by the determinant of a matrix $B=[b_{i,j}]$ of size $k-1$, where $b_{i,j}$ is the number of paths going from $u_i$ to $v_j$. Since our paths stay weakly above the line $y=x+(t-n+1)$, using the classical result of the \emph{reflection principle} \cite{andre1887}, the number of paths going from $u_i$ to $v_j$ is
    \begin{equation}\label{eq.reflection}
        b_{i,j} = \binom{m+n-2k+2}{m-k-i+j+1} - \binom{m+n-2k+2}{m-k-i-j+2-\delta_{t,n-k}}.
    \end{equation}

    The proof will be completed once we evaluate $\det B$. By Lemma \ref{lem.det}, taking $d=k-1$, $A=m+n-2k+2$, $L_i = -(m-k-i+1)$, $c=1-\delta_{t,m-k}$ in \eqref{eq.det1}, and noting that the middle product in \eqref{eq.det1} reads 
    \begin{equation*}
    \prod_{1 \leq i < j \leq k-1} (L_j-L_i) = \prod_{1 \leq i < j \leq k-1} (j-i) = \prod_{i=1}^{k-2} i!,
    \end{equation*}
    we then obtain the desired product formula \eqref{eq.maxstaircaseprod}. This completes the proof of Theorem \ref{thm3}.    
\end{proof}

\subsection{Proof of Theorem \ref{thm4}}\label{sec.skewshape}

We consider the region $R = \lambda/\mu$ to be a skew shape, where $\lambda$ and $\mu$ satisfy the conditions \eqref{eq.skewcondition}. Recall that if we identify each box in the Young diagram of $\lambda/\mu$ as a vertex, and two boxes are adjacent if and only if the two corresponding vertices are connected by an edge, then the resulting graph is the planar dual graph of $\lambda/\mu$; under our setting, such a graph is also the Young diagram of the skew shape denoted by $\bar{\lambda}/\bar{\mu}$.

To prove Theorem \ref{thm4}, we need the following lemma due to Kreweras \cite{Kreweras65} which provides a determinant formula for the number of lattice paths contained in the Young diagram of $\lambda/\mu$ from its lower left corner to its upper right corner, see also \cite[Theorem 10.7.1]{Kra15L} for a slightly different setting. 
\begin{lemma}[Kreweras \cite{Kreweras65}] \label{lem.skewshapepath}
    Let $n$ be the number of parts of $\lambda/\mu$. Then the number of lattice paths contained in the Young diagram of $\lambda/\mu$ from its lower left corner to its upper right corner is given by \eqref{eq.f},
    \begin{equation}
        f(\lambda,\mu) = \det \left[ \binom{\lambda_j-\mu_i+1}{j-i+1} \right]_{i,j=1}^{n}. 
    \end{equation}
\end{lemma}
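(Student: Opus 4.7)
The plan is to prove Kreweras' determinantal formula via the Lindstr\"om--Gessel--Viennot (LGV) lemma, encoding each lattice path in the Young diagram of $\lambda/\mu$ as an $n$-tuple of non-intersecting east-north lattice paths in the plane with carefully chosen endpoints.

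Concretely, I would set $U_i = (\mu_i - i + 1,\, i - 1)$ and $V_j = (\lambda_j - j + 1,\, j)$ for $i, j = 1, \ldots, n$. The displacement from $U_i$ to $V_j$ is $(\lambda_j - \mu_i + i - j,\, j - i + 1)$, so the number of unrestricted east-north lattice paths from $U_i$ to $V_j$ is $\binom{\lambda_j - \mu_i + 1}{j - i + 1}$, matching the $(i,j)$-entry of the matrix in \eqref{eq.f}. Since $\mu$ and $\lambda$ are partitions, the $x$-coordinates $\mu_i - i + 1$ and $\lambda_j - j + 1$ strictly decrease in their index, while the $y$-coordinates $i - 1$ and $j$ strictly increase. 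Thus $\{U_i\}$ and $\{V_j\}$ lie in a standard LGV configuration, so that for every non-identity permutation $\sigma$, any family of east-north paths $P_i \colon U_i \to V_{\sigma(i)}$ necessarily contains a crossing pair. The LGV lemma then identifies the determinant in \eqref{eq.f} with the number of non-intersecting families $(P_1, \ldots, P_n)$ having $P_i \colon U_i \to V_i$.

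It remains to biject these non-intersecting families with lattice paths $P$ in $\lambda/\mu$ going from the lower-left corner $(\mu_n, 0)$ to the upper-right corner $(\lambda_1, n)$. Given $P$, let $c_i$ be the $x$-coordinate of the unique north step that $P$ makes in row $i$ from the top of $\lambda/\mu$; monotonicity and row containment give $c_1 \geq c_2 \geq \cdots \geq c_n$ and $\mu_i \leq c_i \leq \lambda_i$, and these conditions conversely determine $P$ uniquely. On the LGV side, each $P_i$ is determined by the position $a_i$ of its unique north step, with $a_i \in [\mu_i - i + 1,\, \lambda_i - i + 1]$; a short analysis at the shared height $y = i$ shows that $P_i$ and $P_{i+1}$ are vertex-disjoint if and only if $a_{i+1} < a_i$. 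Setting $b_i := a_i + i - 1 \in [\mu_i, \lambda_i]$ converts the non-intersection condition into $b_{i+1} \leq b_i$, and the identification $c_i = b_i$ establishes the desired bijection, which combined with the previous paragraph yields the lemma.

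The main subtlety is the ``forced east-step'' direction of the bijection: given a sequence $(c_1, \ldots, c_n)$ satisfying the constraints, the east segments between consecutive north steps of $P$ must actually lie inside the Young diagram. At height $y = n - i$ the forced east segment runs from $(c_{i+1}, n-i)$ to $(c_i, n-i)$, and the only way it could leave $\lambda/\mu$ is through a column-gap between rows $i$ and $i+1$ arising from $\mu_i > \lambda_{i+1}$. Under the implicit hypothesis $\mu_i \leq \lambda_{i+1}$ for all $i$ (which is satisfied by the skew shapes in Section \ref{sec.skewshape} to which the lemma is applied), the union of the column ranges of rows $i$ and $i+1$ covers the interval $[\mu_{i+1}, \lambda_i - 1]$ without a gap, so every forced east segment is automatically valid and the bijection is complete.
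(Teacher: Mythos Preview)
The paper does not prove Lemma~\ref{lem.skewshapepath}; it merely cites Kreweras~\cite{Kreweras65} (and \cite[Theorem~10.7.1]{Kra15L}) and uses the result as a black box. Your proposal therefore supplies something the paper omits, and your LGV argument is the standard and correct way to establish this formula: the endpoints $U_i=(\mu_i-i+1,i-1)$ and $V_j=(\lambda_j-j+1,j)$ are compatibly ordered, each path $P_i$ has a single north step, and the shift $b_i=a_i+i-1$ cleanly converts strict inequality into the weak monotonicity $c_1\geq\cdots\geq c_n$ that parametrizes paths in $\lambda/\mu$.

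Your identification of the one genuine subtlety is also accurate. The reverse direction of the bijection (reconstructing a path in $\lambda/\mu$ from $(c_1,\ldots,c_n)$) requires that the east segment at height $n-i$ stay inside the diagram, which fails precisely when $\mu_i>\lambda_{i+1}$, i.e.\ when the skew shape is disconnected. You are right that the shapes $\gamma_{k-1-j,i-1}(\bar\lambda)/\gamma_{k-1-j,i-1}(\bar\mu)$ arising in Section~\ref{sec.skewshape} are connected under the hypotheses~\eqref{eq.skewcondition}, so the lemma applies there without issue. (In the disconnected case the count of paths is zero, and one can check separately that the determinant vanishes as well, but this is not needed for the paper.)
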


Now, we are ready to prove Theorem \ref{thm4}. 
\begin{proof}[Proof of Theorem \ref{thm4}]
    Using the idea of the proof of Theorem \ref{thm1} and the discussion at the beginning of this section, the set $\mathcal{F}_{\lambda/\mu;k}$ of maximal $I_k$-avoiding $(0,1)$-fillings of $\lambda/\mu$ is in bijection with the set of $k-1$ non-intersecting lattice paths $(p_1^{\prime},\dots,p_{k-1}^{\prime})$ with the following properties.
    \begin{itemize}
        \item $p_i^{\prime}$ starts at $u_i = (k-i-1,i-1)$ and ends at $v_i = (n-i,m-k+i)$ for each $i=1,2,\dots,k-1$, and
        \item $p_i^{\prime}$'s are lattice paths in the planar dual graph $\bar{\lambda}/\bar{\mu}$.
    \end{itemize}

    For each lattice path $p_i^{\prime}$, we may extend the starting point from $u_i$ to $u^{\prime}_i=(0,i-1)$ and the ending point from $v_i$ to $v^{\prime}_i=(n-1,m-k+i)$. This is due to the fact that there is only one non-intersecting path connecting $\{u_1^{\prime},\dots,u_{k-1}^{\prime}\}$ with $\{u_1,\dots,u_{k-1}\}$; each path consists of horizontal steps only. Similarly, there is only one non-intersecting path connecting $\{v_1,\dots,v_{k-1}\}$ with $\{v_1^{\prime},\dots,v_{k-1}^{\prime}\}$. We may extend the first property above to
    \begin{itemize}
        \item $p_i^{\prime}$ starts at $u^{\prime}_i = (0,i-1)$ and ends at $v^{\prime}_i = (n-1,m-k+i)$ for each $i=1,2,\dots,k-1$.
    \end{itemize}

    To count the number of such families of non-intersecting lattice paths, we apply again the Lindstr\"om--Gessel--Viennot theorem (as stated in the proof of Theorem \ref{thm3}). This time, the $(i,j)$-entry of the matrix $B=[b_{i,j}]$ is given by the number of lattice paths going from $u^{\prime}_{i}$ to $v^{\prime}_{j}$ on the Young diagram of the shape obtained from $\bar{\lambda}/\bar{\mu}$ by deleting the first $k-1-j$ parts and the last $i-1$ parts. Using the notation \eqref{eq.gamma}, this shape can be written as $\gamma_{k-1-j,i-1}(\bar{\lambda})/\gamma_{k-1-j,i-1}(\bar{\mu})$. 

    Finally, by Lemma \ref{lem.skewshapepath}, the entry $b_{i,j}$ is given by
    \begin{equation}
        b_{i,j} = f(\gamma_{k-1-j,i-1}(\bar{\lambda}),\gamma_{k-1-j,i-1}(\bar{\mu})).
    \end{equation}
    We then obtain the desired formula \eqref{eq.thm4} by taking the determinant of the matrix $B$. This completes the proof of Theorem \ref{thm4}.   
\end{proof}

\section{Concluding remarks}\label{sec.remarks}

In this paper, we present a new idea to study maximal $I_k$-avoiding $(0,1)$-fillings of skew shapes $\lambda/\mu$ by viewing them as non-intersecting lattice paths going in the southwestern-northeastern direction on the square lattice. Our idea also works for maximal $J_k$-avoiding $(0,1)$-fillings of $\lambda/\mu$ when $\lambda/\mu$ is a rectangle. Due to symmetry, one can obtain maximal $J_k$-avoiding $(0,1)$-fillings of a rectangle by simply flipping a rectangle across its horizontal (or vertical) symmetry axis from those that are $I_k$-avoiding, in which case the corresponding non-intersecting lattice paths go in the northwestern-southeastern direction. It would be interesting to extend this framework to maximal $J_k$-avoiding $(0,1)$-fillings of general skew shapes and find more connections between $I_k$-avoiding and $J_k$-avoiding $(0,1)$-fillings. 

On the other hand, it is natural to consider $(0,1)$-matrices that avoid patterns other than $I_k$ or $J_k$, or that avoid multiple patterns simultaneously, and then analyze the structure or find the number of these matrices with maximally many $1$'s. We hope that our results will inspire further research on maximal $Q$-avoiding $(0,1)$-fillings of various shapes for a suitable class of patterns $Q$. 

\noindent \textbf{Acknowledgements.} 

The authors are grateful to Shen-Fu Tsai for pointing out an error in the earlier version of this paper. We also thank the anonymous reviewer for carefully reading the manuscript and providing helpful suggestions.

This research was supported by National Science and Technology Council, Taiwan, through grants 113-2115-M-003-010-MY3 (S.-P. Eu) and 114-2811-M-003-024 (Y.-L. Lee).




\end{document}